\renewcommand\E{\mathbb{E}}
\newcommand\Z{\mathbb{Z}}
\newcommand\R{\mathbb{R}}
\newcommand\eps{\varepsilon}
\newcommand\Exp{\operatorname{Exp}}
\begin{document}

\title*{Narrow progressions in the primes}
\author{Terence Tao and Tamar Ziegler}
\institute{Terence Tao \at UCLA Department of Mathematics, 405 Hilgard Ave, Los Angeles CA 90095, USA, \email{tao@math.ucla.edu}
\and Tamar Ziegler \at Einstein Institute of Mathematics, Edmond J. Safra Campus, Givat Ram, The Hebrew University of Jerusalem, Jerusalem, 91904, Israel \email{tamarz@math.huji.ac.il}}
%
%
\maketitle

\abstract{In a previous paper of the authors, we showed that for any polynomials $P_1,\dots,P_k \in \Z[\mathbf{m}]$ with $P_1(0)=\dots=P_k(0)$ and any subset $A$ of the primes in $[N] = \{1,\dots,N\}$ of relative density at least $\delta>0$, one can find a ``polynomial progression'' $a+P_1(r),\dots,a+P_k(r)$ in $A$ with $0 < |r| \leq N^{o(1)}$, if $N$ is sufficiently large depending on $k,P_1,\dots,P_k$ and $\delta$.  In this paper we shorten the size of this progression to $0 < |r| \leq \log^L N$, where $L$ depends on $k,P_1,\dots,P_k$ and $\delta$.  In the linear case $P_i = (i-1)\mathbf{m}$, we can take $L$ independent of $\delta$.
The main new ingredient is the use of the densification method of Conlon, Fox, and Zhao to avoid having to directly correlate the enveloping sieve with dual functions of unbounded functions.}

\section{Introduction}

\subsection{Previous results}

We begin by recalling the well-known theorem of Szemer\'edi \cite{szemeredi} on arithmetic progressions, which we phrase as follows:

\begin{theorem}[Szemer\'edi's theorem]\label{szthm}  Let $k \geq 1$ and $\delta > 0$, and suppose that $N$ is sufficiently large depending on $k,\delta$.  Then any subset $A$ of $[N] := \{n \in \Z: 1 \leq n \leq N \}$ with cardinality $|A| \geq \delta N$ will contain at least one arithmetic progression $a, a+r, \ldots, a+(k-1)r$ of length $k$, with $r > 0$.
\end{theorem}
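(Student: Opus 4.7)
The plan is to establish Szemer\'edi's theorem by a density increment argument driven by Gowers uniformity norms. Suppose $A \subseteq [N]$ has density $\delta$ and contains no nontrivial $k$-term progression; I would aim to derive a contradiction when $N$ is large in terms of $k$ and $\delta$. The first step is a generalized von Neumann inequality: writing $f_A := 1_A - \delta 1_{[N]}$ for the balanced function, one shows that the normalized count of $k$-APs in $A$ differs from $\delta^k$ by at most $O(\|f_A\|_{U^{k-1}[N]})$. Since $A$ is progression-free, its normalized $k$-AP count is $o(1)$, which forces $\|f_A\|_{U^{k-1}[N]} \geq c_1(\delta) > 0$.

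Next I would apply the inverse theorem for the $U^{k-1}$-norm: large $U^{k-1}$-norm yields a $(k-2)$-step nilsequence $n \mapsto F(g(n)\Gamma)$ on some nilmanifold $G/\Gamma$ with which $f_A$ has correlation at least $c_2(\delta)$. Invoking the quantitative equidistribution theorem of Green and Tao for polynomial orbits on nilmanifolds, I would decompose $[N]$ into arithmetic progressions of length roughly $N^{\eta}$ (for some $\eta = \eta(\delta) > 0$) on which the nilsequence is approximately constant. A pigeonhole step then produces one such progression $P$ on which the relative density of $A$ is at least $\delta + c_3(\delta)$.

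Iterating this density increment and rescaling $P$ to an interval $[N']$ at each stage, after at most $O_\delta(1)$ rounds the density on the current sub-progression would exceed $1$, yielding the desired contradiction, provided $N$ was chosen large enough that every sub-progression in the iteration remains long enough to sustain the von Neumann estimate and the equidistribution decomposition.

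The main obstacle is the inverse theorem for $U^{k-1}$ when $k \geq 4$: this is the deep structural result of Bergelson--Host--Kra, Green--Tao, and Green--Tao--Ziegler, and supplying it together with effective quantitative control on the complexity of the nilmanifold is by far the hardest ingredient. For $k=3$ ordinary Fourier analysis replaces the nilsequence machinery and one recovers Roth's classical $L^\infty$ density increment, which is elementary by comparison.
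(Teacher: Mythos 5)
First, a point of comparison: the paper does not prove this theorem at all --- it is quoted as background and attributed to Szemer\'edi \cite{szemeredi}, so there is no internal proof to measure your argument against. What you have written is a sketch of the modern ``higher-order Fourier'' route (generalized von Neumann inequality, inverse theorem for the $U^{k-1}$ norm, equidistribution of nilsequences, density increment), which is a legitimate and non-circular strategy --- the Green--Tao--Ziegler inverse theorem does not use Szemer\'edi's theorem as an input --- and is quite different from Szemer\'edi's original combinatorial argument. Your first step is fine modulo the standard technicalities (embed $[N]$ in $\Z/N'\Z$ with $N'$ a prime of size $\asymp kN$ to avoid wraparound, and note that trivial progressions with $r=0$ contribute only $O(1/N)$ to the normalized count).

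The substantive issue is that essentially all of the difficulty of the theorem has been moved into two black boxes. The inverse theorem for $U^{k-1}[N]$ with $k\geq 4$ is a result whose proof is at least as deep as Szemer\'edi's theorem itself, and you acknowledge this; a referee would not accept ``supply the inverse theorem'' as a proof step, though it is acceptable as a reduction. Less forgivably, the step where you pass from correlation with a nilsequence to a density increment on a progression of length $N^{\eta}$ is stated too casually: a general $(k-2)$-step nilsequence $F(g(n)\Gamma)$ is \emph{not} approximately constant on progressions of length $N^{\eta}$. What is true is that, via the Green--Tao factorization theorem for polynomial orbits, one can decompose $[N]$ into progressions on which the orbit is a product of a slowly-varying part, a periodic part, and a totally equidistributed part on a subnilmanifold, and only after averaging out the equidistributed part does one extract pieces of small oscillation; this is a genuine lemma requiring an induction on step and degree, not an immediate consequence of Lipschitz continuity of $F$. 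You should either prove that lemma or cite it precisely. It is also worth noting that Gowers' original proof of this route deliberately used only a weak ``local'' inverse theorem (correlation with polynomial phases on many short progressions), precisely because that weaker statement can be proved self-containedly and already feeds directly into a density increment; your version trades that self-containedness for reliance on the much harder global inverse theorem.
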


In fact, by partitioning $[N]$ into intervals of some sufficiently large but constant size $L = L(k,\delta)$ and using the pigeonhole principle, one can ensure that the progression described above is ``narrow'' in the sense that $r \leq L(k,\delta)$.

In \cite{gt-primes}, Szemer\'edi's theorem was relativized to the primes $\mathbb{P} = \{2,3,5,7,\dots\}$:

\begin{theorem}[Szemer\'edi's theorem in the primes]\label{szp}  Let $k \geq 2$ and $\delta > 0$, and suppose that $N$ is sufficiently large depending on $k,\delta$.  Then any subset $A$ of $[N] \cap \mathbb{P}$ with $|A| \geq \delta |[N] \cap \mathbb{P}|$ will contain at least one arithmetic progression $a, a+r, \ldots, a+(k-1)r$ of length $k$, with $r > 0$.
\end{theorem}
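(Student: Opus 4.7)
The plan is to follow the transference strategy introduced in \cite{gt-primes}. I would begin with a \emph{W-trick}: let $w = w(N)$ be a slowly growing parameter (say $w = \log \log N$) and set $W = \prod_{p \leq w} p$. By the pigeonhole principle, there exists a residue $b \in (\Z/W\Z)^\times$ for which the set $A_b = \{n \in [N/W] : Wn+b \in A\}$ has density at least $\delta \phi(W)/W$ relative to the primes of the form $Wn+b$. This reduces to a setting where the residual primes are equidistributed modulo all small moduli, so that the only remaining obstruction to randomness sits at the large-prime divisors.

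Next I would construct an \emph{enveloping sieve} $\nu : [N/W] \to \R^+$ of Goldston--Yıldırım / Selberg type, concretely via a truncated divisor sum $\nu(n) = c \log N \left( \sum_{d \mid Wn+b,\, d \leq N^\eta} \mu(d) \chi(\log d/\log N^\eta) \right)^2$ for a smooth cutoff $\chi$, where $\eta = \eta(k)>0$ is small. This has the properties that $\nu$ majorizes (a constant multiple of) the indicator of primes in the residue class, that $\E_n \nu(n) = 1 + o(1)$, and crucially that $\nu$ satisfies the pseudorandomness conditions of \cite{gt-primes}: the \emph{$k$-linear forms condition} and the \emph{correlation condition}. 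Both are verified by contour integration against the Dirichlet series of $\zeta$ and of relevant local factors, exploiting that the W-trick kills the small-prime contributions.

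With $\nu$ in hand, the heart of the argument is a \emph{relative Szemer\'edi theorem}: if $0 \leq f \leq \nu$ satisfies $\E_n f(n) \geq \delta'$, then $f$ contains $\gtrsim_{k,\delta'} N^2$ arithmetic progressions of length $k$. I would prove this by a Koopman--von Neumann type decomposition $f = f_U + f_{U^\perp}$, in which $f_U$ is bounded (so classical Szemer\'edi applies) and $f_{U^\perp}$ has negligible Gowers $U^{k-1}$-norm. A generalized von Neumann theorem, proved by $k-2$ applications of Cauchy--Schwarz and using the linear forms condition to replace each appearance of $\nu$ by $1$ up to small errors, then shows that $f_{U^\perp}$ contributes negligibly to the AP count, so the count for $f$ is comparable to the count for $f_U$. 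Applying this with $f = c (\log N) 1_{A_b}$ produces the desired progression in $A_b$, hence in $A$.

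The main obstacle is the relative Szemer\'edi step, where one has to carry out the Koopman--von Neumann decomposition for functions dominated by an unbounded measure $\nu$ and then apply the generalized von Neumann theorem. This requires simultaneously controlling $\nu$-weighted Gowers-type norms by the ordinary $U^{k-1}$-norm against bounded functions, and verifying that the correlation condition suffices to absorb the error terms generated by expanding squares against $\nu$. The W-trick and the precise form of $\nu$ are engineered exactly so that these technical pseudorandomness estimates go through.
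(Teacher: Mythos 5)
Your outline is essentially the argument the paper relies on: Theorem \ref{szp} is quoted directly from \cite{gt-primes}, and your combination of the $W$-trick, a Goldston--Y{\i}ld{\i}r{\i}m enveloping sieve verified to satisfy the linear forms and correlation conditions, and transference via a Koopman--von Neumann decomposition together with a generalized von Neumann theorem is precisely that proof. (One notational quibble: in \cite{gt-primes} it is $f_U$ that denotes the Gowers-\emph{uniform} part and $f_{U^\perp}$ the bounded anti-uniform part, so your labels are swapped, but the mathematical content is correct.)
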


In particular, the primes contain arbitrarily long arithmetic progressions.   

The proof of Theorem \ref{szp} does not place a bound on $r$ beyond the trivial bound $r \leq N$.  In contrast with Theorem \ref{szthm}, one cannot hope here to make the step size $r$ of the progression as short as $L(k,\delta)$.  Indeed, as observed in \cite{benatar}, if $N$ is large enough then from \cite[Theorem 3]{gpy} one can find a subset $A$ of $[N] \cap \mathbb{P}$ with $|A| \geq \frac{1}{2} |[N] \cap \mathbb{P}|$ (say) such that the gap between any two consecutive elements of $A$ is $\geq c\log N$ for some absolute constant $c>0$, which of course implies in this case that $r$ must be at least $c \log N$ as well.  Indeed, one can improve this lower bound to $\log^{k-1} N$ by a small modification of the argument:

\begin{proposition}\label{rrn}  Let $k \geq 2$, let $\eps >0$ be sufficiently small depending on $k$, and suppose that $N$ is sufficiently large depending on $k$.  Then there exists a subset $A$ of $[N] \cap \mathbb{P}$ with $|A| \gg |[N] \cap \mathbb{P}|$ such that $A$ does not contain any arithmetic progression $a, a+r, \ldots, a+(k-1)r$ of length $k$ with $0 < r < \eps \log^{k-1} N$.
\end{proposition}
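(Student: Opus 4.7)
The plan is a greedy deletion argument based on a standard upper bound sieve. The key estimate I would establish first is
\[
\#\{(a,r) : 0 < r < R,\ a, a+r, \ldots, a+(k-1)r \in \mathbb{P} \cap [N]\} \ll_k \frac{N R}{\log^k N}
\]
valid for all $1 \leq R \leq N^{1/2}$, say. For each fixed $r$, this is the Selberg ($\Lambda^2$) upper bound sieve applied to the $k$-tuple of affine-linear forms $n, n+r, \ldots, n+(k-1)r$, which yields a bound of the shape $C_k\, \mathfrak{S}_k(r)\, N/\log^k N$, where $\mathfrak{S}_k(r)$ is the associated singular series. Summing over $r \leq R$ then uses only the elementary fact that $\mathfrak{S}_k(r)$ is $O_k(1)$ on average.

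Next I would take $R := \eps \log^{k-1} N$, so that the number of ``forbidden'' $k$-APs contained in $\mathbb{P} \cap [N]$ is at most $C_k \eps\, N/\log N$, which by the prime number theorem is at most $C'_k \eps\, |[N] \cap \mathbb{P}|$. Define $A$ by removing from $\mathbb{P}\cap [N]$ one term of each such AP. Then $A$ contains no arithmetic progression of length $k$ whose common difference lies in $(0, \eps \log^{k-1} N)$, while
\[
|A| \geq (1 - C'_k \eps)\, |[N] \cap \mathbb{P}| \gg |[N] \cap \mathbb{P}|
\]
provided $\eps$ is chosen sufficiently small in terms of $k$. This gives exactly the proposition.

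The main technical point is the uniform Selberg-sieve bound together with the $O_k(1)$-on-average estimate for $\mathfrak{S}_k(r)$: the local factors of $\mathfrak{S}_k(r)$ at small primes $p \leq k$ can be inflated for $r$ with special divisibility behaviour, so a modest amount of sieve bookkeeping of Halberstam--Richert type is required to control the sum. Nothing here is specific to the present paper, and once the counting estimate is in place the rest of the argument reduces to a one-line union bound, giving a simple refinement of the Benatar--GPY construction from the case $k=2$ (gaps of size $\gg \log N$) to general $k$ (no $k$-APs with gap less than $\eps \log^{k-1} N$).
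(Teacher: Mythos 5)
Your proposal is correct and follows essentially the same route as the paper: a Halberstam--Richert upper bound sieve for each fixed $r$, an $O_k(1)$-on-average bound for the singular series $\mathfrak{G}(k,r)$ summed over $r < \eps\log^{k-1}N$, and a deletion/union-bound step. The only part you leave as an assertion --- that $\sum_{0<r<M}\mathfrak{G}(k,r)\ll_k M$ --- is exactly the step the paper carries out explicitly, via $\mathfrak{G}(k,r)\ll_k \sum_{p\mid r}\log^{C_k}p/p$ and interchanging the sums.
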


\begin{proof}  For any $0 < r < \eps \log^{k-1} N$, let $N_r$ denote the number of $a \in [N]$ such that $a,a+r,\dots,a+(k-1)r$ are all prime.  By the union bound and the prime number theorem, it will suffice to show that
$$ \sum_{0 < r < \eps \log^{k-1} N} N_r \leq \frac{1}{2} \frac{N}{\log N}$$
(say), since one can then form $A$ by removing all the elements $a$ in $[N] \cap \mathbb{P}$ that are associated to one of the $N_r$.  But from standard sieve theoretic bounds (see e.g. \cite[Theorem 5.7]{hr}) we have
$$ N_r \leq C_k \mathfrak{G}(k,r) \frac{N}{\log^k N}$$
where $C_k$ depends only on $k$ and $\mathfrak{G}(k,r)$ is the singular series\footnote{All sums and products over $p$ in this paper are understood to be ranging over primes.}
\begin{equation}\label{grdef}
 \mathfrak{G}(k,r) = \prod_p \left(1-\frac{1}{p}\right)^k \left(1 - \frac{\nu_p(r)}{p}\right)
\end{equation}
and $\nu_p(r)$ is the number of residue classes $a \in \Z/p\Z$ such that at least one of $a,a+r,\dots,a+(k-1)r$ is equal to $0$ mod $p$, so it suffices to show that
$$ \sum_{0 < r < M} \mathfrak{G}(k,r) \leq C_k M$$
for any $M \geq 1$ (we allow $C_k$ to represent a different $k$-dependent constant from line to line, or even within the same line).  One could obtain precise asymptotics on the left-hand side using the calculations of Gallagher \cite{gallagher}, but we can obtain a crude upper bound that suffices as follows.  From \eqref{grdef} we see that
$$ \mathfrak{G}(k,r) \leq C_k \exp\left( C_k \sum_{p|r} \frac{1}{p} \right)$$
and hence by \cite[Lemma E.1]{tz}
$$ \mathfrak{G}(k,r) \leq C_k \sum_{p|r} \frac{\log^{C_k} p}{p}$$
and thus
$$ \sum_{0 < r < M} \mathfrak{G}(k,r) \leq C_k \sum_p  \frac{\log^{C_k} p}{p} \frac{M}{p} \leq C_k M$$
as required.
\end{proof}

In the converse direction, if we use the ``Cram\'er random model'' of approximating the primes $\mathbb{P} \cap [N]$ by a random subset of $[N]$ of density $1/\log N$, we can asymptotically almost surely match this lower bound, thanks to the work of Conlon-Gowers \cite{cg} and Schacht \cite{schacht}:

\begin{proposition}\label{nrr}  Let $k \geq 2$ and $\delta, \eps > 0$, let $C>0$ be sufficiently large depending on $\delta,k$, and suppose that $N$ is sufficiently large depending on $k,\delta,\eps$.  Let $P \subset [N]$ be chosen randomly, such that each $n \in [N]$ lies in $P$ with an independent probability of $1/\log N$.  Then with probability at least $1-\eps$, every subset $A$ of $P$ with $|A| \geq \delta |P|$ will contain an arithmetic progression $a, a+r, \ldots, a+(k-1)r$ of length $k$ with $0 < r \leq C \log^{k-1} N$.
\end{proposition}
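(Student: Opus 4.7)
I would combine the sparse relative Szemer\'edi theorem of Conlon-Gowers \cite{cg} and Schacht \cite{schacht} with the standard pigeonhole argument used to extract narrow progressions from Szemer\'edi's theorem. Set $M := C \log^{k-1} N$ with $C = C(k,\delta)$ chosen large enough that $p := 1/\log N$ comfortably exceeds the sparse Szemer\'edi threshold $C_0(k,\delta)\, M^{-1/(k-1)} = C_0 C^{-1/(k-1)}/\log N$ for intervals of length $M$, and partition $[N]$ into $T := \lfloor N/M\rfloor$ consecutive intervals $I_1,\ldots,I_T$ of length $M$ (discarding any leftover piece).

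\textbf{The high-probability structural event.} Chernoff together with a union bound over $j$ give that, on an event of probability $\geq 1-\eps/2$, every $|P\cap I_j|$ has size of order $M/\log N$, and in particular is nonempty. Independently, for each $j$ the Conlon-Gowers/Schacht theorem applied to the rescaled interval $I_j\cong[M]$ at random density $p$ asserts that, with probability $1-o(1)$, \emph{every} subset $B$ of $P\cap I_j$ with $|B|\geq \delta|P\cap I_j|$ contains a $k$-term AP. The quantitative version of that theorem provides failure probability that is better than any polynomial (indeed exponentially small) in $M$, so for $N$ large enough depending on $\eps$ one may union-bound over the $T = O(N/\log^{k-1} N)$ intervals to ensure the relative Szemer\'edi property holds on \emph{every} $I_j$ with probability $\geq 1-\eps/2$.

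\textbf{Extracting a narrow AP, and the main obstacle.} On the joint event of probability $\geq 1-\eps$, pick any $A\subseteq P$ with $|A|\geq \delta|P|$; the identity $|A| = \sum_j |A\cap I_j| \geq \delta \sum_j |P\cap I_j|$ furnishes by averaging some $j$ with $|A\cap I_j|\geq \delta|P\cap I_j|$, and on this $I_j$ the structural property yields a $k$-term AP in $A\cap I_j$ whose common difference $r$ is automatically in the range $0<r<M = C\log^{k-1} N$. The only genuinely delicate step is verifying that the Conlon-Gowers/Schacht failure probability decays fast enough to survive the union bound over $T = \Theta(N/\log^{k-1}N)$ subintervals (one needs per-interval failure $\ll \eps M/N$); their exponentially-small concentration bounds comfortably supply this, but one should unpack their precise quantitative statements to confirm.
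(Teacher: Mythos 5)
Your overall strategy --- partition $[N]$ into intervals of length about $C\log^{k-1}N$, apply the Conlon--Gowers/Schacht sparse Szemer\'edi theorem inside each interval, and locate by averaging an interval in which $A$ has relative density $\gg \delta$ inside $P$ --- is exactly the paper's. The point of divergence is how the ``bad'' intervals (those where the sparse Szemer\'edi property fails) are handled, and this is where your argument has a genuine problem. You take a union bound over all $T=\Theta(N/\log^{k-1}N)$ intervals, which requires the per-interval failure probability to be $o(\eps\log^{k-1}N/N)$, i.e.\ essentially $o(1/N)$. Your justification --- that the failure probability is ``better than any polynomial (indeed exponentially small) in $M$'' --- does not support this. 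The parenthetical is false: the failure probability is at least the probability that $P\cap I_j$ has fewer than $k$ elements, which is of order $e^{-qM}$ with $q=1/\log N$, i.e.\ $e^{-C\log^{k-2}N}$; this is exponentially small in $qM$, not in $M$. More importantly, ``superpolynomially small in $M$'' is nowhere near sufficient, because $N$ is \emph{exponentially} large in $M^{1/(k-1)}$: a quantity can be smaller than every power of $M$ and still be vastly larger than $M/N$. To make the union bound work you need the precise exponential form $\exp(-bqM)=\exp(-bC\log^{k-2}N)$ of the failure probability (this is available from Schacht's transference theorem, but not from the bare a.a.s.\ statements usually quoted, and I am not aware of it in Conlon--Gowers), and then a case check: for $k\geq 4$ this beats $1/N=e^{-\log N}$ automatically; for $k=3$ it equals $N^{-bC}$ and beats $1/N$ only because $C$ may be taken large in terms of $\delta,k$ (so the largeness of $C$ is doing double duty beyond clearing the density threshold $q\geq C_0M^{-1/(k-1)}$); and for $k=2$ the union bound fails outright, since a constant fraction of intervals satisfy $|P\cap I_j|\leq 1$ (that case must be disposed of separately, e.g.\ by pigeonhole once $|P|\gg N/\log N$).

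The paper sidesteps all of this. It requires each interval to be good only with probability $\geq 1-\delta\eps/10$ --- a constant, so the qualitative $1-o(1)$ form of Conlon--Gowers/Schacht suffices with no rate at all. Linearity of expectation plus Markov's inequality then show that, with probability $\geq 1-\eps$, only a $\delta/5$-fraction of $[N]$ (in measure) lies in bad intervals; consequently a set $A$ with $|A|\geq\delta|P|$ cannot have more than half its mass in bad intervals, and the pigeonhole produces a \emph{good} interval with $|A\cap I_j|\geq\frac{\delta}{2}|P\cap I_j|$ (note the loss of a factor $2$ in the density, which you would also need in order to absorb the leftover piece you discard and the error from the averaging). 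If you wish to keep the union-bound route, you must supply the exponential failure bound explicitly and carry out the case analysis above; as written, the justification of the key step is incorrect even though the conclusion is salvageable for $k\geq 3$.
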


We remark that a modification of the argument in Proposition \ref{rrn} shows that we cannot replace the large constant $C$ here by an arbitrarily small constant $c>0$.

\begin{proof}  We partition $[N]$ into intervals $I_1,\dots,I_m$ of length between $\frac{C}{2} \log^{k-1} N$ and $C\log^{k-1} N$, thus $m \leq \frac{2 N}{C \log^{k-1} N}$.  For each interval $I_i$, we see from \cite[Theorem 2.2]{schacht} or \cite[Theorem 1.12]{cg} that with probability at least $1-\frac{\delta \eps}{10}$, every subset $A_i$ of $P \cap I_i$ with $|A_i| \geq \frac{\delta}{2} |P \cap I_i|$ will contain an arithmetic progression of length at least $k$.  Call an interval $I_i$ \emph{bad} if this property does not hold, thus each $I_i$ is bad with probability at most $\delta \eps/10$.  By linearity of expectation followed by Markov's inequality, we conclude that with probability at least $1-\eps$, at most $\frac{\delta N}{5C \log^{k-1} N}$ of the $I_i$ are bad.  Then if $A \subset P$ is such that $|A| \geq \delta |P|$, then at most $\frac{\delta}{2} |P|$ of the elements of $A$ are contained in bad intervals, so from the pigeonhole principle there exists a good interval $I_i$ such that $|A \cap I_i| \geq \frac{\delta}{2} |P \cap I_i|$, and the claim follows.
\end{proof}

It is thus natural to conjecture that in Theorem \ref{szp} one can take $r$ to be as small as $\log^{k-1+o(1)} N$.

\begin{remark} If one seeks progressions inside the full set $\mathbb{P}$ of primes, rather than of dense subsets of the primes, then the Hardy-Littlewood prime tuples conjecture \cite{hardy} predicts that one can take $r$ to be of size $O_k(1)$; indeed, one should be able to take $r$ to be the product of all the primes less than or equal to $k$.  In the case $k=2$, the claim that one can take $r=O(1)$ amounts to finding infinitely many bounded gaps between primes, a claim that was only recently established by Zhang \cite{zhang}.  For higher $k$, the claim $r=O_k(1)$ appears to currently be out of reach of known methods; the best known result in this direction, due to Maynard \cite{maynard} (and also independently in unpublished work of the first author), shows that for any sufficiently large $R>1$, there exist infinitely many intervals of natural numbers of length $R$ that contain $\geq c \log R$ primes for some absolute constant $c>0$, but this is too sparse a set of primes to expect to find length $k$ progressions for any $k \geq 3$.
\end{remark}

We now consider generalizations of the above results, in which arithmetic progressions $a,a+r,\dots,a+(k-1)r$ are replaced by ``polynomial progressions'' $a+P_1(r),\dots,a+P_k(r)$.  More precisely, let $\Z[\mathbf{m}]$ denote the ring of polynomials of one indeterminate variable $\mathbf{m}$ with integer coefficients.  Then Bergelson and Leibman \cite{bl} established the following polynomial version of Theorem \ref{szthm}:

\begin{theorem}[Polynomial Szemer\'edi's theorem]\label{pzt} Let $k \geq 1$, let $P_1,\dots,P_k \in \Z[\mathbf{m}]$ be such that $P_1(0)=\dots=P_k(0)$, let $\delta > 0$, and suppose that $N$ is sufficiently large depending on $k,P_1,\dots,P_k,\delta$.  Then any subset $A$ of $[N]$ with cardinality $|A| \geq \delta N$ will contain at least one polynomial progression $a+P_1(r), a+P_2(r), \ldots, a+P_k(r)$ with $r > 0$.
\end{theorem}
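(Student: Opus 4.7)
The plan is to deduce Theorem \ref{pzt} from a polynomial version of Furstenberg's multiple recurrence theorem via the correspondence principle, following Bergelson and Leibman. I would begin by translating the combinatorial statement into ergodic theory: by a standard diagonalization argument (the Furstenberg correspondence principle), if the conclusion failed then for some $k$, $\delta$, and $P_1,\dots,P_k$ there would exist an invertible probability-measure-preserving system $(X,\mathcal{B},\mu,T)$ and a measurable set $E$ with $\mu(E) \geq \delta$ such that
\[ \lim_{N\to\infty} \frac{1}{N}\sum_{r=1}^N \mu\left(T^{-P_1(r)}E \cap \dots \cap T^{-P_k(r)}E\right) = 0. \]
After shifting the progression by the common value $P_1(0)$, I may assume $P_1(0)=\dots=P_k(0)=0$. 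The task becomes proving the ergodic polynomial recurrence theorem: this average has a positive liminf whenever $\mu(E)>0$.

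The main workhorse will be PET (polynomial exhaustion technique) induction. I would study the multilinear averages
\[ A_N(f_1,\dots,f_k) \;=\; \frac{1}{N}\sum_{r=1}^N \prod_{i=1}^k T^{P_i(r)}f_i \]
for bounded $f_i$ and bound $\|A_N\|_{L^2}$ by iterated van der Corput differencing. Each differencing step replaces the polynomial system $(P_1,\dots,P_k)$ by a new system of the form $(P_i(r+h)-P_j(r))_{i,j}$ (modulo negligible terms) which, for a carefully chosen index $j$, has strictly smaller PET weight in a well-founded ordering on polynomial families. Iterating enough times, I would reduce the bound on $\|A_N\|_{L^2}$ to a Gowers--Host--Kra seminorm $\|f_1\|_{U^{s+1}}$ with $s=s(k,P_1,\dots,P_k)$ depending only on the polynomials; equivalently, the Host--Kra nilfactor $Z_s$ is characteristic for this average.

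Once the problem is reduced to $X = Z_s$, I would invoke Leibman's polynomial equidistribution theorem for nilmanifolds to show that the orbit $r \mapsto (T^{P_1(r)}x,\dots,T^{P_k(r)}x)$ equidistributes in an explicit closed subnilmanifold $Y \subset Z_s^k$. The assumption $P_i(0)=0$ guarantees that $Y$ contains the diagonal copy of $Z_s$; hence the Haar measure of $E \times \dots \times E$ on $Y$ is positive, delivering the required positive lower bound on the recurrence average.

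The hard step is the PET induction itself. Designing a well-founded complexity ordering on finite polynomial families that provably decreases under each van der Corput differencing step requires substantial bookkeeping, and showing that the resulting characteristic factor is genuinely a Host--Kra nilfactor of bounded step $s$ depends on the deep structure theory of Host, Kra and Ziegler. By contrast, the correspondence principle, the Leibman equidistribution input, and the final positivity argument on the nilfactor are comparatively routine once the characteristic factor has been identified.
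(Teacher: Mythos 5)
The paper does not actually prove Theorem \ref{pzt}; it is quoted as a black box from Bergelson and Leibman \cite{bl}, so the relevant comparison is with their original argument. Your route is genuinely different from theirs. Bergelson and Leibman work entirely within the Furstenberg--Katznelson framework: after the correspondence principle they run PET induction in tandem with the classical structure theorem (a tower of compact extensions topped by a weakly mixing extension), proving multiple recurrence directly by transfinite induction on extensions, with no nilpotent groups in sight. What you propose is the later ``characteristic factor'' proof: PET induction used only to show that some Host--Kra factor $Z_s$ controls the averages, followed by Leibman's equidistribution theorem for polynomial orbits on nilmanifolds and a positivity argument on the nilfactor. Your route buys explicit, quantifiable control (it is the one that generalizes to the quantitative transference arguments of the present paper, where PET induction reappears in Theorem \ref{pgvn}), at the cost of importing the Host--Kra--Ziegler structure theory and Leibman's equidistribution results, which postdate \cite{bl} by a decade; the original route is self-contained relative to Furstenberg's machinery but gives no effective information.

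Two caveats on your sketch. First, the final positivity step is less routine than you suggest: $E$ does not live on the nilmanifold, so you must pass to $\tilde f=\E(1_E\mid Z_s)$, and $Z_s$ is only an \emph{inverse limit} of nilsystems with $\tilde f$ merely measurable; one has to approximate by a genuine nilsystem and by continuous functions, and then argue (using $P_i(0)=0$ and equidistribution in the orbit closure $Y$) that for a positive density set of $r$ the points $T^{P_i(r)}x$ all return near $x$, so that $\prod_i \tilde f(T^{P_i(r)}x)\gtrsim \tilde f(x)^k$ on a positive measure set. This is the Bergelson--Host--Kra style lower bound argument and it has real content. Second, the claim that $Z_s$ is characteristic for polynomial averages requires care for systems that are ergodic but not totally ergodic (the rational spectrum interacts with the polynomials); this was resolved by Host--Kra and Leibman but is a known pitfall. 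Neither caveat is fatal --- both steps are in the literature --- but as written they are asserted rather than proved.
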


Of course, Theorem \ref{szthm} is the special case of Theorem \ref{pzt} when $P_i = (i-1) \mathbf{m}$. As with Theorem \ref{szthm}, a partitioning argument shows that we may take $r \leq L(k,P_1,\dots,P_k,\delta)$ for some quantity $L$ depending on the indicated parameters.  The polynomial analogue of Theorem \ref{szp} was established by the authors in \cite{tz}:

\begin{theorem}[Polynomial Szemer\'edi's theorem in the primes]\label{szpp}  Let $k \geq 2$, let $P_1,\dots,P_k \in \Z[\mathbf{m}]$ be such that $P_1(0)=\dots=P_k(0)$,  $\eps, \delta > 0$, and suppose that $N$ is sufficiently large depending on $k,P_1,\dots,P_k,\delta,\eps$.  Then any subset $A$ of $[N] \cap \mathbb{P}$ with $|A| \geq \delta |[N] \cap \mathbb{P}|$ will contain at least one polynomial progression $a+P_1(r), a+P_2(r), \ldots, a+P_k(r)$ with $0 < r < N^\eps$.
\end{theorem}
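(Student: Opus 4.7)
The plan is to follow the Green--Tao transference strategy \cite{gt-primes}, lifted to the polynomial setting via the PET induction of Bergelson--Leibman \cite{bl}. The first step is the familiar $W$-trick: set $W := \prod_{p \leq w} p$ for some slowly growing $w = w(N)$ chosen so that $W \leq N^{\eps/2}$, and use pigeonhole to locate a residue class $b \in (\Z/W\Z)^{\times}$ for which $A' := \{n \in [N/W] : Wn+b \in A\}$ has relative density $\gtrsim \delta$ inside the set of $n$ such that $Wn+b$ is prime. Let $f$ denote the appropriately log-normalized indicator of $A'$ on $[N/W]$, so $\E f \gtrsim \delta$.

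Next I would construct a Goldston--Y\i ld\i r\i m type pseudorandom majorant $\nu : [N/W] \to \R^+$ satisfying $f \leq C\nu$, and verify for it the polynomial linear forms condition and polynomial correlation condition adapted to the system $a + P_1(r), \ldots, a + P_k(r)$. Combined with the dense model theorem, this produces a decomposition $f = \tilde f + g$ with $0 \leq \tilde f \leq 1$, $\E \tilde f \gtrsim \delta$, and $g$ negligible in an appropriate polynomial Gowers-type norm.

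The heart of the argument is a polynomial generalized von Neumann theorem showing that the count of polynomial progressions weighted by $f$, with $r$ restricted to a range $[1,M]$ with $M = L \cdot N^{\eps/2}/W$ say, is well-approximated by the corresponding count weighted by $\tilde f$, with error controlled by the Gowers-type norm of $g$. This is proved by PET induction: iterated Cauchy--Schwarz and van der Corput manipulations reduce the polynomial system to one of lower PET complexity, and at each stage the pseudorandomness of $\nu$ is used to absorb the resulting (polynomial) linear forms. Given this control, a Varnavides-type consequence of Theorem \ref{pzt} applied to $\tilde f$ produces many polynomial progressions inside $A'$ with step $r \leq L = L(k,P_1,\dots,P_k,\delta)$. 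Lifting back through the $W$-trick yields a progression in $A$ of step $Wr \leq WL \leq N^{\eps}$ by the choice of $w$.

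The principal obstacle is the polynomial generalized von Neumann theorem relative to $\nu$. Unlike in the linear Green--Tao setting, the auxiliary forms produced when Cauchy--Schwarz is applied to a polynomial progression depend polynomially on the step $r$, so closing the induction requires a genuine \emph{polynomial} correlation condition on $\nu$ rather than just a linear forms condition, and one must verify that this condition persists through every node of the PET induction tree while keeping $r$ inside the prescribed range. Once this PET-compatible pseudorandomness is in place, each of the remaining steps (construction of $\nu$, dense model decomposition, Varnavides, rescaling) is essentially standard.
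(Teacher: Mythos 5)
Your outline is essentially the proof of this theorem given in \cite{tz}, which is exactly what the paper cites for Theorem \ref{szpp}: the $W$-trick, a Goldston--Y{\i}ld{\i}r{\i}m majorant satisfying polynomial linear forms and correlation conditions, a PET-based polynomial generalized von Neumann theorem relative to $\nu$, the dense model theorem, and a quantitative (Varnavides-type) form of Bergelson--Leibman, with the step parameter ranging over $m \in [M]$ and $WM \leq N^\eps$. Two small points: the progressions you produce have step $Wm$ with $m \leq M$ (a step bounded by a constant $L(k,P_1,\dots,P_k,\delta)$, as written in your final paragraph, is impossible for subsets of the primes by Proposition \ref{rrn}, so the correct accounting is $Wm \leq WM \leq N^\eps$); and the present paper's own machinery yields a slightly different proof of the same theorem in which the verification of the correlation condition --- the step you correctly identify as the principal obstacle --- is avoided entirely by the Conlon--Fox--Zhao densification argument.
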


In particular, this implies Theorem \ref{szp} with a bound $0 < r \leq N^{o(1)}$.  

\begin{remark}  The condition $P_1(0)=\dots=P_k(0)$ in Theorem \ref{pzt} can be relaxed to the property of \emph{intersectivity} (that $P_1,\dots,P_k$ have a common root in the profinite integers $\hat \Z = \lim_{\infty \leftarrow m} \Z/m\Z$); see \cite{bll}.  It is not yet known if Theorem \ref{szpp} can similarly be relaxed to intersective polynomials, except in the $k=2$ case which was established in \cite{le}. We will not pursue this matter here.
\end{remark}

\subsection{Main new result}

Our main result is to improve the bound on the $r$ parameter in Theorem \ref{szpp} to be polylogarithmic in size:

\begin{theorem}[Short polynomial progressions in the primes]\label{szpp-short}  Let $k \geq 2$, let $P_1,\dots,P_k \in \Z[\mathbf{m}]$ be such that $P_1(0)=\dots=P_k(0)$, $\eps, \delta > 0$, and suppose that $N$ is sufficiently large depending on $k,P_1,\dots,P_k,\delta,\eps$.  Then any subset $A$ of $[N] \cap \mathbb{P}$ with $|A| \geq \delta |[N] \cap \mathbb{P}|$ will contain at least one polynomial progression $a+P_1(r), a+P_2(r), \ldots, a+P_k(r)$ with $0 < r < \log^L N$, where $L$ depends only on $k,P_1,\dots,P_k,\delta$.
\end{theorem}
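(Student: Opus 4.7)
The plan is to adapt the transference-principle framework of \cite{tz}, which established Theorem \ref{szpp} with the weaker bound $r < N^\eps$, but to localize it to intervals of length $M := \log^L N$ for a sufficiently large $L = L(k,P_1,\ldots,P_k,\delta)$. The first step is the standard $W$-trick with $W = \prod_{p \leq w} p$ for some slowly growing $w$, followed by a pigeonhole partition of $[N]$ into blocks of length $\sim M$: since $A$ has density $\delta$ among the primes of $[N]$, some block $I$ must contain $A$ with relative density at least $\delta/2$ among the $W$-tricked primes in $I$. It then suffices to produce a polynomial progression in $A \cap I$ with $0 < r < M^{1/D}$ for a suitable $D = D(P_1,\ldots,P_k)$. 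Construct a pseudorandom enveloping sieve majorant $\nu : I \to \R^+$ of Goldston-Yildirim / Green-Tao type with $\E_I \nu = 1 + o(1)$, normalized so that the (logarithmically weighted) indicator $f$ of $A \cap I$ satisfies $0 \leq f \leq \nu$ and $\E_I f \gtrsim \delta$, and so that $\nu$ satisfies a polynomial linear-forms condition adapted to $(P_1,\ldots,P_k)$ at the short scale $M$.

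The key new ingredient is the densification method of Conlon, Fox, and Zhao, which replaces the direct unbounded-correlation arguments of \cite{tz}. One proceeds iteratively: for $j = 1,\ldots,k$, densify the $j$-th slot by producing a bounded function $g_j : I \to [0, C]$ (with $C$ an absolute constant) with $\E_I g_j \gtrsim \delta$ such that replacing the $j$-th occurrence of $f$ by $g_j$ in the counting expression
$$ \E_{a \in I,\, 0 < r \leq M^{1/D}}\ \prod_{i=1}^k f(a + P_i(r)) $$
changes its value by at most $o(1)$. Each replacement error is controlled by a Gowers-type uniformity norm of $f - g_j$ relative to $\nu$; the crucial point is that at every step one only needs to control correlations of $\nu$ with dual functions of \emph{bounded} functions, rather than with dual functions of unbounded functions as in \cite{tz}. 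This is the obstruction that forced \cite{tz} to take $r$ as large as $N^\eps$, and avoiding it is what allows the analysis to go through at the shorter scale $\log^L N$.

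After $k$ iterations one is reduced to lower-bounding $\E_{a,r} \prod_{i=1}^k g_i(a + P_i(r))$ with each $g_i : I \to [0, C]$ bounded and of mean $\gtrsim \delta$, and this follows directly from the polynomial Szemer\'edi theorem (Theorem \ref{pzt}) applied on $I$, provided $L$ is taken large enough that the Bergelson-Leibman window at density $\gtrsim \delta/C$ fits inside $[1, M^{1/D}]$. The main obstacle is the verification of the short-scale polynomial linear-forms condition for $\nu$: the enveloping sieve must behave pseudorandomly on the polynomial configurations $a + P_1(r),\ldots,a + P_k(r)$ at scale $M = \log^L N$, which requires careful polynomial-modulus bookkeeping and sieve remainder estimates small compared with the small main terms at this scale. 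In the linear case $P_i = (i-1)\mathbf{m}$ the arithmetic is simpler and the sieve estimates are insensitive to $\delta$, which is why $L$ can be taken independent of $\delta$ there.
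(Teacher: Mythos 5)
There is a genuine gap, and it occurs at the very first structural step. You propose to pigeonhole $[N]$ into blocks $I$ of length $M = \log^L N$ and then run the entire transference argument \emph{inside a single block}, constructing a pseudorandom majorant $\nu: I \to \R^+$ with $\E_I \nu = 1+o(1)$ and $0 \le f \le \nu$, $\E_I f \gtrsim \delta$. No such majorant can be built by Goldston--Yildirim/Green--Tao sieve technology on an interval of length $\log^L N$. The sieve weight $\bigl(\sum_{d \mid Wn+b} \mu(d)\chi(\log d/\log R)\bigr)^2$ has mean $\asymp W/(\phi(W)\log R)$ only after averaging $n$ over a range much longer than $R^2$ (the error in counting multiples of $[d,d']$ is $O(1)$ per pair of divisors, so one needs $|I| \gg R^{2+\epsilon}$); hence on an interval of length $\log^L N$ the sieve level $R$ is forced down to $\log^{O(L)} N$, the normalization $\log R \asymp \log\log N$ is far too small, and the primes then have relative density $O(\log\log N/\log N) = o(1)$ under the normalized majorant rather than $\gtrsim \delta$. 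Producing a majorant of the primes with the properties you require on intervals of length $\log^{O(1)} N$ is essentially equivalent to prime-counting in intervals far shorter than anything known (well beyond RH), so this step is not a technicality that "careful polynomial-modulus bookkeeping" can repair.

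The paper's proof avoids this by never localizing the $a$-variable: it works on all of $X = \Z/N\Z$ with the global sieve $\nu$ at level $R = N^{\epsilon_0}$, and only the progression parameter is restricted, via the form $\Lambda(f_1,\dots,f_k) = \E_{m\in[M]}\int_X T^{P_1(Wm)/W}f_1\cdots T^{P_k(Wm)/W}f_k$. The short scale $M=\log^L N$ then enters only through the ranges of the shift parameters in the polynomial forms condition and the local Gowers norms, where the relevant issue is not sieve error terms but making the \emph{diagonal} contributions (of size $\log^{O(1)}N$ on average) negligible against an average over $\gg \log^{L/2} N$ shifts --- which is exactly why $L$ must be large. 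The bounded-function input is then the quantitative polynomial Szemer\'edi theorem for this global-in-$a$, local-in-$m$ form (Theorem 3.2 of \cite{tz}), not Bergelson--Leibman applied on a short interval. Your identification of densification as the ingredient that removes the correlation with dual functions of unbounded functions is correct in spirit, but as written the argument cannot get off the ground because the object $\nu$ it is applied to does not exist at the scale you work at.
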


In particular, there are infinitely polynomial progressions $a+P_1(r), a+P_2(r), \ldots, a+P_k(r)$ consisting entirely of primes with $0 < r \ll \log^L a$, with $L$ now depending only on $k,P_1,\dots,P_k$.  This is new even in the case of arithmetic progressions $a, a+r, \dots, a+(k-1)r$.

A modification of the proof of Proposition \ref{rrn} shows that some power of $\log N$ is needed in the upper bound on $r$ in Theorem \ref{szpp-short}.  However, we do not know what the optimal value of $L$ is; our argument for general $P_1,\dots,P_k$ uses the PET induction method \cite{pet}, and as such $L$ will grow rapidly with the degrees of the $P_1,\dots,P_k$.  The dependence of $L$ on $\delta$ occurs for technical reasons, and we conjecture that one can in fact select $L$ to be independent of $\delta$; we can verify this conjecture in the arithmetic progression case $P_i = (i-1) \mathbf{m}$, and in fact we can take the explicit value $L := Ck 2^{k}$ in this case for some fixed constant $C$ (actually $C=3$ would already suffice).  We discuss this explicit variant of Theorem \ref{szpp-short} in Section \ref{linear-sec}.  Propositions \ref{rrn}, \ref{nrr} suggest that we should in fact be able to set $L = k-1+\eps$ in these cases, although our methods do not seem strong enough to achieve this, even in the $k=3$ case.  

It is possible that one might be able to directly modify of the arguments in \cite{tz} (which are in turn based on those in \cite{gt-primes})  to establish Theorem \ref{szpp-short}; the main technical difficulties in doing so are the need of having to correlate the enveloping sieve with dual functions of unbounded functions and verifying  the ``correlation condition'' required in that argument, in the setting when $r$ is as small as $\log^L N$ (and the argument appears to have no chance of working when $L$ is independent of $\delta$).  On the other hand, the need for the analogous correlation and boundedness conditions in \cite{gt-primes} to prove Theorem \ref{szp} were recently removed\footnote{The ``dual function condition'' that the dual function of the enveloping sieve is bounded already failed in the arguments in \cite{tz}, which was a significant cause of the complexity of that paper due to the need to find substitutes for this condition (in particular, the correlation condition became significantly more difficult to even state, let alone prove).  But the arguments in \cite{cfz} do not require any version of the dual function condition at all, leading to some simplifications in the current argument over those in \cite{tz}.} 
by Conlon, Fox, and Zhao \cite{cfz}, using a new method which they refer to as ``densification''.  We will be able to combine the densification method with the arguments in \cite{tz} to establish Theorem \ref{szpp-short}.  As a consequence, we also obtain a slightly different proof of Theorem \ref{szpp} than the one in \cite{tz}, in which the (rather complicated) verification of the correlation condition is no longer necessary, but the densification arguments of Conlon, Fox, and Zhao are inserted instead.

\section{Preliminary reductions}\label{notation-sec}

We now begin the proof of Theorem \ref{szpp-short}.  
We use the following asymptotic notation.  We let $N'$ be an asymptotic parameter tending to infinity along some sequence $N' = N'_j$ of natural numbers.  All mathematical objects in this paper are implicitly permitted to depend on $N'$, unless they are explicitly designated to be \emph{fixed}, in which case they are independent of $N'$.  We use $X = O(Y)$, $X \ll Y$, or $Y \gg X$ to denote the estimate $|X| \leq CY$ for some fixed $C$, and $X = o(Y)$ to denote the estimate $|X| \leq c(N') Y$ where $c(N') \to 0$ as $N' \to \infty$.  Our statements will be implicitly restricted to the regime in which $N'$ is sufficiently large depending on all fixed parameters.

Suppose for sake of contradiction that Theorem \ref{szpp-short} failed.  Carefully negating the quantifiers (and relabeling $N$ as $N'$, for reasons that will be clearer later), we conclude that we may find a fixed $k \geq 2$ and fixed polynomials $P_1,\dots,P_k \in \Z[\mathbf{m}]$ with $P_1(0)=\dots=P_k(0)$, fixed $\delta > 0$, a sequence $N' = N'_j$ of natural numbers going to infinity, and a set $A = A_{N'} \subset [N',2N'] \cap \mathbb{P}$ such that
$$ |A| \geq \delta |[N'] \cap \mathbb{P}|$$
and such that for any fixed $L>0$, there are no polynomial progressions $a+P_1(r), a+P_2(r), \ldots, a+P_k(r)$ in $A$ with $0 < r < \log^L N'$ (recall that we assume $N'$ sufficiently large depending on fixed quantities such as $L$).

The first few reductions are essentially the same to those in \cite{tz}.  We begin with the ``$W$-trick'' from \cite{gt-primes} to eliminate local irregularities modulo small primes.  We let $w = w_{N'}$ be a sufficiently slowly growing function of $N'$; for instance, we could take $w := \frac{1}{10} \log\log\log N'$ as in \cite{tz} for sake of concreteness, although the precise value of $w$ is unimportant.   We then define the quantity
$$ W := \prod_{p<w} p$$
and the natural number
$$ N := \left\lfloor \frac{N'}{W} \right\rfloor.$$
From the prime number theorem\footnote{Actually, the weaker lower bound $\pi(x) \gg \frac{x}{\log x}$ of Chebyshev would suffice here.} we have
$$ |A| \gg \frac{N W}{\log N}$$ 
and all elements of $A$ larger than $\sqrt{N}$ (say) are coprime to $W$.  Thus, by the pigeonhole principle, we may find $b \in [W]$ coprime to $W$ (and depending on $N'$ of course) such that
$$ |\{ n \in [N]: nW+b \in A \}| \gg \frac{N W}{\phi(W) \log N},$$
where $\phi$ denotes the Euler totient function.

The domain $[N]$ is not quite translation invariant.  In order to eliminate this (minor) difficulty, we will follow \cite{tz} and work instead in the cyclic group $X := \Z/N\Z$ using the obvious embedding $\iota: [N] \to X$.  We give this space the uniform Haar probability measure, thus
$$ \int_X f := \frac{1}{N} \sum_{x \in X} f(x)$$
for any $f: X \to \R$.  We also define shift maps $T^h f: X \to \R$ for any $h \in \Z$ by
$$ T^h f(x) := f(x+h);$$
clearly we have the identities
\begin{eqnarray*}
T^h T^k f &= T^{h+k} f \\
T^h(fg) &= (T^h f) (T^h g)\\
\int_X T^h f &= \int_X f
\end{eqnarray*}
for any $h,k \in \Z$ and $f,g: X \to \R$.  We will use these identities frequently without further comment in the sequel.

We will need a fixed quantity $\epsilon_0 > 0$ (depending only on $k,P_1,\dots,P_k$) to be chosen later.
We define the function $f: X \to \R$ by the formula
\begin{equation}\label{fdef}
 f(\iota(n)) = \frac{\epsilon_0}{10} \frac{\phi(W) \log N}{W} 1_{[N-\sqrt{N}] \backslash [\sqrt{N}]}(n) 1_A(nW+b)
\end{equation}
for $n \in [N]$, where $1_A$ denotes the indicator function of $A$; the reason for the normalizing factor $\frac{\epsilon_0}{10}$ is so that $f$ can be dominated by an enveloping sieve $\nu$, to be defined later.  We then have
\begin{equation}\label{fx}
 \int_X f \gg 1,
\end{equation}
where we allow the implied constants here to depend on the fixed quantities $\delta$ and $\epsilon_0$.

Now let $L > 0$ be a sufficiently large fixed quantity (depending on $k,P_1,\dots,P_k,\delta$) to be chosen later.  We will need the ``coarse scale''
\begin{equation}\label{m-def}
M := \log^L N
\end{equation}
which will basically be the domain of interest for the polynomials $P_1,\dots,P_k$, and the ``fine scale''
$$ H := \log^{\sqrt{L}} N,$$
which will basically be the scale used for various applications of the van der Corput inequality.  Note in particular that any quantity of size $O( H^{O(1)} \sqrt{M} )$ will also be $o(M)$ if the implied constants do not depend on $L$, and $L$ is large enough.

By hypothesis, if $N$ (or $N'$) is large enough, then the set $A$ contains no polynomial progressions of the form $a+P_1(Wm), \dots, a+P_k(Wm)$ with $m \in [M]$.  In particular, we see that
\begin{equation}\label{lff}
 \Lambda(f,\ldots,f) = 0
\end{equation}
where $\Lambda$ is the $k$-linear form
\begin{equation}\label{lambda-def}
\Lambda(f_1,\dots,f_k) := \E_{m \in [M]} \int_X T^{P_1(Wm)/W} f_1 \dots T^{P_k(Wm)/W} f_k
\end{equation}
for $k$ functions $f_1,\dots,f_k: X \to \R$, and we use the averaging notation
$$ E_{a \in A} f(a) := \frac{1}{|A|} \sum_{a \in A} f(a).$$
Note that there are no ``wraparound'' issues caused by the embedding into $\Z/N\Z$, due to our removal in \eqref{fdef} of the elements $n \in [N]$ that are less than $\sqrt{N}$ or larger than $N-\sqrt{N}$.  

On the other hand, using the (multi-dimensional) polynomial Szemer\'edi theorem of Bergelson and Leibman \cite{bl}, we have the following quantitative version of Therem \ref{pzt}:

\begin{theorem}[Quantitative Polynomial Szemer\'edi theorem]\label{qpst}  Let $\delta > 0$ be fixed, and let $g: X \to \R$ obey the pointwise bounds
$$ 0 \leq g \leq 1$$
(i.e., $0 \leq g(x) \leq 1$ for all $x \in X$), as well as the integral lower bound $\int_X g \geq \delta - o(1)$.  Then we have
$$ \Lambda( g, \dots, g ) \geq c(\delta) - o(1)$$
where $c(\delta)>0$ depends only on $\delta$, $k$, and $P_1,\dots,P_k$.
\end{theorem}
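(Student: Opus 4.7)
I would derive Theorem \ref{qpst} from the qualitative polynomial Szemer\'edi theorem of Bergelson and Leibman (Theorem \ref{pzt}) via a standard compactness/removal argument. The key observation is that $M = \log^L N$ tends to infinity as $N' \to \infty$ for any fixed $L$, so the averaging range in the definition of $\Lambda$ is as long as one wishes, which is exactly what is needed to turn the qualitative Theorem \ref{pzt} into a quantitative count.

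After shifting the integration variable by the common constant $P_1(0)/W$, one may assume without loss of generality that $P_1(0) = \dots = P_k(0) = 0$. Then $Q_i(\mathbf{m}) := P_i(W\mathbf{m})/W$ lies in $\Z[\mathbf{m}]$ with $Q_i(0) = 0$, and
\[
\Lambda(g,\dots,g) = \E_{m \in [M]} \int_X \prod_{i=1}^k T^{Q_i(m)} g.
\]
I would then invoke the quantitative finitary consequence of Theorem \ref{pzt}: for any fixed $\delta > 0$ and any fixed polynomials $R_1,\dots,R_k \in \Z[\mathbf{m}]$ with $R_i(0)=0$, there exist constants $M_0,c_0 > 0$ such that whenever $M' \geq M_0$, $N$ is sufficiently large, and $h:\Z/N\Z \to [0,1]$ satisfies $\int h \geq \delta$, one has $\E_{m \in [M']} \int \prod_i T^{R_i(m)} h \geq c_0$. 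This finitary form is extracted from Theorem \ref{pzt} by the usual supersaturation trick---iteratively remove one detected progression at a time and reapply Theorem \ref{pzt}, reaching a contradiction if the count were $o(1)$---or equivalently via the Furstenberg correspondence principle and an ultralimit/compactness argument.

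\textbf{Main obstacle.} The principal subtlety is that the polynomials $Q_i$ depend on the slowly-growing parameter $W = W_{N'}$, so a priori the constant $c_0$ produced by the finitary BL might degrade as $W \to \infty$, whereas Theorem \ref{qpst} demands a constant depending only on $\delta,k,P_1,\dots,P_k$. My preferred way to bypass this is to lift $g$ to the enlarged cyclic group $\Z/(WN)\Z$ by setting $\tilde g(Wx+s) := g(x)$ for $s \in [0,W)$ (so $\int \tilde g = \int g \geq \delta$), and to apply Theorem \ref{pzt} directly to the \emph{fixed} original polynomials $P_1,\dots,P_k$ on $\Z/(WN)\Z$. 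Since $\tilde g$ is constant on length-$W$ blocks and each $P_i(Wm)$ is divisible by $W$, the corresponding average on $\Z/(WN)\Z$ with step restricted to $W \cdot [M]$ coincides exactly with $\Lambda(g,\dots,g)$. The restriction of the step to multiples of $W$ within $[WM]$ is then absorbed by a pigeonhole over residue classes mod $W$ (or, more cleanly, by appealing to the multi-dimensional formulation of \cite{bl} with an auxiliary $\Z$-action by the map $r \mapsto Wr$), yielding a constant $c(\delta,k,P_1,\dots,P_k) > 0$ as required.
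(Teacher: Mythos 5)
The paper does not prove this theorem from scratch: it simply cites \cite[Theorem 3.2]{tz} (whose proof runs a Varnavides-type averaging argument on top of the \emph{multi-dimensional} polynomial Szemer\'edi theorem of Bergelson--Leibman, as the sentence preceding the theorem indicates), together with the remark that the common constant term $P_1(0)=\dots=P_k(0)$ may be normalized to zero by a change of variables. You correctly identify the crux of any self-contained argument --- the polynomials $P_i(W\mathbf{m})/W$ have coefficients growing with $W = W_{N'}$, so a naive application of the one-dimensional Bergelson--Leibman theorem would yield a constant degrading as $N'\to\infty$ --- but your primary mechanism for resolving it does not close. After lifting $g$ to $\tilde g$ on $\Z/(WN)\Z$, the quantitative one-dimensional theorem for the fixed polynomials $P_i$ lower-bounds the average $\E_{r \in [WM]} \int \prod_i T^{P_i(r)} \tilde g$ over \emph{all} steps $r$, whereas $\Lambda(g,\dots,g)$ is the sub-average over the single residue class $r \equiv 0 \pmod{W}$. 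Pigeonholing over the $W$ residue classes only shows that \emph{some} class carries an average $\geq c_0$, not the class $r\equiv 0$; and since $\lfloor (s+P_i(r))/W\rfloor$ depends on $s$ when $W \nmid P_i(r)$, the block-constancy of $\tilde g$ does not allow you to convert the count for a general class into the count for the zero class. Restricting to $r = Wr'$ at the outset simply reinstates the $W$-dependent polynomials you were trying to avoid.

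The correct repair is the parenthetical route you mention but do not develop, and it is essentially what \cite{tz} does: writing $P_i(W\mathbf{m})/W = \sum_{j} c_{i,j} W^{j-1} \mathbf{m}^j$, one views $T^{P_i(Wm)/W}$ as $\prod_j S_j^{c_{i,j} m^j}$ for the commuting shifts $S_j := T^{W^{j-1}}$, and invokes the multi-dimensional Bergelson--Leibman theorem (in quantitative, Varnavides form) for the now $W$-independent polynomials $c_{i,j}\mathbf{m}^j$; the resulting constant depends only on $\delta$, $k$ and the coefficients $c_{i,j}$, as required. A second, more minor gap: your ``remove one detected progression at a time'' derivation of the quantitative count yields only $\gg_\delta N$ configurations rather than the required $\gg_\delta NM$ (each iteration certifies a single configuration), so one genuinely needs a Varnavides-type averaging over sub-configurations, or the transfer of the already-averaged ergodic form of Bergelson--Leibman via the correspondence principle, as you alternatively suggest.
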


\begin{proof} See \cite[Theorem 3.2]{tz}. In that theorem, the common value $P_1(0)=\dots=P_k(0)$ of the $P_i$ was assumed to be zero, but the general case follows from this case by a simple change of variables.
\end{proof}

This theorem cannot be directly applied to control $\Lambda(f,\dots,f)$, because $f$ is not uniformly bounded.  However, Theorem \ref{szpp-short} will now be a consequence of the following claim.

\begin{theorem}[Approximation by bounded function]\label{abf}  Suppose that $\epsilon_0>0$ is sufficiently small (depending on $k,P_1,\dots,P_k$).  Let $\eps>0$ be fixed.  Suppose that $L$ is a fixed quantity which is sufficiently large depending on $k,P_1,\dots,P_k,\eps,\epsilon_0$. Let $f$ be as in \eqref{fdef}.  Then there exists $g: X \to \R$ with the pointwise bounds
$$ 0 \leq g \leq 1,$$
such that
\begin{equation}\label{fg}
 \left|\int_X f - \int_X g\right| \ll \eps + o(1)
\end{equation}
and
\begin{equation}\label{gg}
 |\Lambda(f,\dots,f) - \Lambda(g,\dots,g)| \ll \eps + o(1),
\end{equation}
where the implied constants in the $\ll$ notation do not depend on $\eps$ or $L$.
\end{theorem}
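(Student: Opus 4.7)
The plan is to construct a pseudorandom enveloping sieve $\nu \colon X \to \R^+$ with $f \le \nu$ pointwise, $\int_X \nu = 1 + o(1)$, and a suitable polynomial forms condition at the coarse scale $M = \log^L N$, adapted both to the progressions $a + P_i(Wm)$ and to a bounded number of van der Corput shifts at the fine scale $H$. As in \cite{tz}, $\nu$ will be a normalized truncated divisor sum of Goldston--Pintz--Yildirim type, but with truncation parameter calibrated to a small fractional power of $M$ rather than of $N'$. The essential feature of the densification approach is that it eliminates the need to verify the intricate ``correlation condition'' and the ``dual function condition'' of \cite{tz}; I therefore only have to establish the linear forms condition for the system of forms generated by $P_1,\dots,P_k$ together with a bounded number of van der Corput shifts, which is the easier half of the pseudorandomness verification in that paper.

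Next I would prove a quantitative generalized von Neumann theorem: if $|f_i| \le 1 + \nu$ for each $i$ and one of them has small Gowers--Host--Kra type norm relative to a suitable characteristic factor, then $|\Lambda(f_1,\dots,f_k)| \ll \eps + o(1)$. This is carried out by the PET induction scheme of \cite{tz}: iteratively apply Cauchy--Schwarz and van der Corput at the fine scale $H$ to reduce the polynomial system to a linear one, then perform a bounded number of further Cauchy--Schwarz steps to arrive at a Gowers $U^s$ norm for some $s = s(k,P_1,\dots,P_k)$. Since $H^{O(1)} \sqrt{M} = o(M)$ once $L$ is chosen larger than a fixed multiple of the PET depth, these shifts stay within the coarse scale $M$ and the small size of $r$ does not degrade the argument.

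With these tools in place I can run the densification scheme of Conlon--Fox--Zhao \cite{cfz}. Starting from $\Lambda(f,\dots,f)$ I iteratively replace each slot by a bounded function: at step $j$, given $\Lambda(g_1,\dots,g_{j-1},f,f,\dots,f)$ with $0 \le g_1,\dots,g_{j-1} \le 1$, I apply a Cauchy--Schwarz manoeuvre in the spirit of \cite{cfz} that duplicates the unbounded slots other than the $j$-th, absorbs the duplicated copies using only the linear forms condition on $\nu$, and leaves behind an expression whose control reduces to a Gowers norm bound on $f$ in the $j$-th slot. The dense model theorem, applied via the same linear forms condition, produces $0 \le g_j \le 1$ with $\int g_j = \int f + o(1)$ and Gowers norm $\|f - g_j\|_U$ arbitrarily small. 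Iterating $k$ times yields the bounded function $g := g_k$ satisfying \eqref{fg} and \eqref{gg}, with an error that accumulates only additively across the $k$ replacements and can therefore be absorbed into the $\eps$ budget.

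The main obstacle is calibrating the parameters so that the linear forms condition on $\nu$ holds at the polylogarithmic scale $M = \log^L N$. In \cite{tz} the analogous estimates were carried out at the considerably more generous scale $N^\eps$, and one must revisit those proofs to check that the moment computations for the truncated divisor sum, evaluated on the polynomial system $\{P_i(Wm_j)\}$ together with van der Corput shifts at scale $H$, still go through uniformly down to scale $M$. Choosing the truncation parameter of the sieve to be a small power of $M$ (rather than of $N$) and running the Goldston--Pintz--Yildirim-type computation in this shorter range should deliver the required estimate, at the cost of forcing $L$ to grow with the PET depth and with $\delta$. Once this local-to-global pseudorandomness bookkeeping is in place, the densification and dense model steps themselves are essentially soft and proceed as in \cite{cfz}.
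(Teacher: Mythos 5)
Your overall architecture is the same as the paper's: a GPY-type enveloping sieve $\nu \geq f$, a polynomial forms condition verified down to the polylogarithmic scale $M=\log^L N$, a PET-based generalized von Neumann theorem reducing $\Lambda$ to (averaged local) Gowers norms, and then the Conlon--Fox--Zhao densification combined with the dense model theorem to produce $g$. The only organizational difference is that you run the slot-by-slot replacement directly on $\Lambda(f,\dots,f)$ in the original CFZ style, whereas the paper first passes to the Gowers norm and then performs the densification induction at the level of the box norm; these are equivalent in substance.

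There is, however, one concrete error: you propose to take the truncation parameter of the sieve to be ``a small fractional power of $M$ rather than of $N'$''. This cannot work. The sieve is a global object on $[N]$ that must pointwise dominate $f$, which on its support equals $\frac{\epsilon_0}{10}\frac{\phi(W)\log N}{W}$; on primes $Wn+b > R$ the normalized divisor sum equals $\frac{\phi(W)\log R}{W}\chi(0)^2$, so the domination $f \leq \nu$ forces $\log R \gg \epsilon_0 \log N$, i.e.\ $R$ must be a small power of $N$ (the paper takes $R=N^{\epsilon_0}$). If instead $R$ were a power of $M=\log^L N$, then $\log R \asymp L\log\log N$ and either the majorization fails by a factor of roughly $\log N/\log\log N$, or (if you renormalize to restore it) the mean $\int_X \nu$ blows up and the dense model theorem no longer applies. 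The correct point, which your proposal has backwards, is that the sieve and its truncation stay at the global scale; what changes at the polylogarithmic scale is only the verification of the forms condition, where the diagonal contributions of size $\log^{O(1)}N$ must be beaten by the length $\sim M$ of the averaging ranges, and this is precisely what forces $L$ to be large (and, via the clearing-denominators step in the orthogonality of $\nu-1$ to products of $K$ dual functions, to depend on $\eps$ and hence on $\delta$). With the truncation restored to $R=N^{\epsilon_0}$, the rest of your outline is sound.
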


Let us assume Theorem \ref{abf} for now, and see how it implies Theorem \ref{szpp-short}.  Let $\epsilon_0>0$ be small enough for Theorem \ref{abf} to apply, $\eps > 0$ be a sufficiently small fixed quantity (depending on $\delta, \epsilon_0$) to be chosen later, and let $L$ be large enough depending on $k,P_1,\dots,P_k,\eps,\epsilon_0$ (in particular, $L$ will depend on $\delta$).  Let $g$ be as in Theorem \ref{abf}, let $M$ be defined by \eqref{m-def}, and $\Lambda$ defined by \eqref{lambda-def} (in particular, $\Lambda$ depends on $L$).  From \eqref{fx}, \eqref{fg}, and the triangle inequality we have
$$ \int_X g \gg 1$$
if $\eps$ is small enough depending on $\epsilon_0,\delta$.  Applying Theorem \ref{qpst}, we have
$$ \Lambda(g,\dots,g) \gg c_0 - o(1)$$
for some $c_0>0$ depending on $\epsilon_0,\delta$ but not on $\eps,L$.  By \eqref{gg} and the triangle inequality, we thus have
$$\Lambda(f,\dots,f) > 0$$
if $\eps$ is small enough (depending on $\epsilon_0,\delta$) and $N$ is large enough, contradicting \eqref{lff}; and Theorem \ref{szpp-short} follows.

It remains to establish Theorem \ref{abf}.  This is the focus of the remaining sections of the paper.

\begin{remark} The above arguments show that if one could remove the dependence of $L$ on $\eps$ in Theorem \ref{abf}, then one could also remove the dependence of $L$ on $\delta$ in Theorem \ref{szpp-short}.
\end{remark}

\section{The enveloping sieve}

As in \cite{gt-primes}, \cite{tz}, we view $f$ as a ``positive density fraction'' of a well-controlled \emph{enveloping sieve} $\nu$, which is defined by the explicit formula
$$ \nu(\iota(n)) := \frac{\phi(W) \log R}{W} \left( \sum_{d|Wn+b} \mu(d) \chi\left( \frac{\log d}{\log R} \right) \right)^2$$
for $n\in [N]$, where 
\begin{equation}\label{R-def}
R := N^{\epsilon_0},
\end{equation}
$\mu$ is the M\"obius function, and $\chi: \R \to \R$ is a fixed smooth even function supported on $[-1,1]$ with the normalization
$$ \int_0^1 |\chi'(t)|^2\ dt = 1$$
(where $\chi'$ is the derivative of $\chi$) and such that $\chi(0) \geq 1/2$ (say).  By construction and \eqref{fdef}, we have the pointwise bound
\begin{equation}\label{fnu}
 0 \leq f \leq \nu,
\end{equation}
that is to say that $0 \leq f(n) \leq \nu(n)$ for all $n \in X$.

From \cite[Corollary 10.5]{tz} we have the crude bound
\begin{equation}\label{joe}
\int_X \prod_{j = 1}^J T^{h_j} \nu = 1 + o(1) + O\left( \Exp\left( O\left( \sum_{1 \leq j < j' \leq J} \sum_{w \leq p \leq R^{\log R}: p | h_j - h_{j'}} \frac{1}{p} \right) \right) \right)
\end{equation}
for any fixed $J$ and any integers $h_1,\dots,h_J = O(\sqrt{N})$ (not necessarily distinct), assuming that $\epsilon_0$ is sufficiently small depending on $J$, and where $\Exp(x) := e^x - 1$, where the implied constant in the $O()$ exponent only depends on $J$.  Here, of course, we use $p|n$ to denote the assertion that $p$ divides $n$.  In particular, we have the mean bound
\begin{equation}\label{nu-stomp}
\int_X \nu = 1 + o(1)
\end{equation}
and the the crude bound
\begin{equation}\label{h1d}
\int_X T^{h_1} \nu \dots T^{h_J} \nu \ll \log^{O(1)} N
\end{equation}
for any fixed $J$ and any integers $h_1,\dots,h_J = O( \sqrt{N} )$ (not necessarily distinct), assuming $\epsilon_0$ is sufficiently small depending on $J$, and where the implied constant in the $O(1)$ exponent depends only on $J$.

One can use \eqref{joe} to establish the following fundamental pseudorandomness property:

\begin{proposition}[Polynomial forms condition]\label{pfc}  Let $J, d, D \geq 1$ be fixed natural numbers.  Suppose that $\epsilon_0$ is sufficiently small depending on $J,d,D$, and that $L$ is sufficiently large depending on $J,d,D$.  Then for any polynomials $Q_1,\dots,Q_J \in \Z[\mathbf{m}_1,\dots,\mathbf{m}_d]$ of degree at most $D$, with coefficients of size $O(W^{O(1)})$, and with $Q_j-Q_{j'}$ non-constant for every $1 \leq j < j' \leq d$, and any convex body $\Omega \in [-M^2,M^2]^d$ of inradius\footnote{The \emph{inradius} of a convex body is the radius of the largest open ball one can inscribe inside the body.} at least $H^{1/2}$, we have the asymptotic
\begin{equation}\label{qjf}
 \E_{\vec h \in \Omega \cap \Z^d} \int_X \prod_{j = 1}^J T^{Q_j(\vec h)} \nu = 1 + o(1).
\end{equation}
\end{proposition}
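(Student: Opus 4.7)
My plan is to apply the pointwise bound \eqref{joe} with $h_j := Q_j(\vec h)$ and then average over $\vec h \in \Omega \cap \Z^d$. Since $\vec h \in \Omega \subset [-M^2, M^2]^d$ and $Q_j$ has degree at most $D$ with coefficients of size $O(W^{O(1)})$, we have $|Q_j(\vec h)| = O(W^{O(1)} M^{2D}) = o(\sqrt{N})$, so the hypotheses of \eqref{joe} are met. This yields, pointwise in $\vec h$,
\begin{equation*}
\int_X \prod_{j=1}^J T^{Q_j(\vec h)} \nu = 1 + o(1) + O\bigl(\Exp(O(S(\vec h)))\bigr),
\end{equation*}
where $S(\vec h) := \sum_{1 \leq j < j' \leq J} \sum_{w \leq p \leq R^{\log R},\, p \mid \phi_{jj'}(\vec h)} 1/p$ with $\phi_{jj'} := Q_j - Q_{j'}$. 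Averaging, the task reduces to showing $\E_{\vec h \in \Omega \cap \Z^d} \Exp(O(S(\vec h))) = o(1)$.

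To handle $S$, I would first extract the ``deterministic'' part coming from primes that divide the gcd of coefficients of some $\phi_{jj'}$. Since each such gcd is at most $W^{O(1)}$, it has at most $O(w/\log w)$ prime divisors that are $\geq w$, each of size $\geq w$, so the deterministic contribution to $S(\vec h)$ is $O(1/\log w) = o(1)$ uniformly in $\vec h$. For the remaining ``random'' primes (those not dividing any coefficient gcd), each $\phi_{jj'}$ reduces to a nonzero polynomial modulo $p$. The main analytical step is then to expand
\begin{equation*}
e^{C \tilde S(\vec h)} = \prod_{j<j'} \prod_{p \text{ random}} \bigl(1 + (e^{C/p}-1) \mathbf{1}_{p \mid \phi_{jj'}(\vec h)}\bigr)
\end{equation*}
and to swap expectation with summation over squarefree divisors $q$, reducing matters to bounding $\Pr_{\vec h \in \Omega \cap \Z^d}(q \mid \phi_{jj'}(\vec h))$.

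For squarefree $q \leq H^{1/2}$, the inradius hypothesis on $\Omega$ gives near-equidistribution of $\vec h$ modulo $q$, which combined with the Schwartz--Zippel bound that any nonzero polynomial of degree $\leq D$ in $d$ variables has at most $O_D(p^{d-1})$ zeros modulo a prime $p$, yields $\Pr_{\vec h}(q \mid \phi_{jj'}(\vec h)) \ll_D \tau(q)^{O(1)}/q$. This bound is multiplicative in $q$, and the sum-over-divisors expansion telescopes to a product of the form $\prod_{p \geq w}(1 + O(1/p^2)) = 1 + O(1/w) = 1 + o(1)$, giving $\E_{\vec h} \Exp(O(\tilde S(\vec h))) = o(1)$. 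The main obstacle will be the intermediate range of moduli $q$ between $H^{1/2}$ and $|\phi_{jj'}|_\infty = O(\log^{O_D(L)} N)$, where neither near-equidistribution (which requires $q$ small relative to the inradius) nor the forced-vanishing bound $\phi_{jj'}(\vec h) = 0$ (which requires $q > |\phi_{jj'}|_\infty$) applies cleanly; one must interpolate via a geometry-of-numbers argument exploiting the non-constancy of $\phi_{jj'}$, and it is precisely this step which forces $L$ to be taken sufficiently large in terms of $J, d, D$.
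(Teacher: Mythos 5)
Your opening reductions match the paper's: apply \eqref{joe} pointwise (the size check $Q_j(\vec h) = O(W^{O(1)} M^{2D}) = o(\sqrt{N})$ is correct and worth making explicit), split off the primes dividing all the coefficients of $\phi_{jj'} := Q_j - Q_{j'}$ using the fact that their product is $O(W^{O(1)})$, and reduce to showing $\E_{\vec h \in \Omega \cap \Z^d} \Exp(O(\tilde S(\vec h))) = o(1)$. The divergence --- and the gap --- is in how you then attack the exponential. By expanding $e^{C\tilde S}$ as a product over primes and opening it into a sum over squarefree moduli $q$, you are forced to estimate $\Pr_{\vec h \in \Omega \cap \Z^d}(q \mid \phi_{jj'}(\vec h))$ for composite squarefree $q$ ranging over all products of primes in $[w, R^{\log R}]$, a set of moduli reaching far beyond both the inradius $H^{1/2}$ and the size $\log^{O_D(L)} N$ of the values $\phi_{jj'}(\vec h)$. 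You correctly flag the intermediate range as ``the main obstacle,'' but then only gesture at ``a geometry-of-numbers argument'' without supplying it; as written, the proof does not close, and this is precisely the hard part of your route.

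The paper sidesteps composite moduli entirely by linearizing \emph{before} averaging: by \cite[Lemma E.1]{tz} one has the pointwise bound $\Exp(O(\sum_{p \in A} 1/p)) \ll \sum_{p \in A} \log^{O(1)} p/p$, so after this step one only ever needs to bound $\E_{\vec h \in \Omega \cap \Z^d} 1_{p \mid \phi_{jj'}(\vec h)}$ for a \emph{single} prime $p$ with $p \nmid \phi_{jj'}$. For a single prime, the Schwarz--Zippel bound on sub-cubes of sidelength $K$ with $1 \leq K \leq p$, combined with the inradius hypothesis, gives $\ll 1/p + H^{-1/2}$ uniformly in $p$ --- there is no intermediate range to interpolate across --- and the resulting sum $\sum_{w \leq p \leq R^{\log R}} (\log^{O(1)} p)(p^{-2} + H^{-1/2} p^{-1})$ is $o(1)$ once $L$ (hence $H = \log^{\sqrt{L}} N$) is large enough to beat the $\log^{O(1)} N$ arising from summing up to $R^{\log R}$. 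That, rather than any divisor-level interpolation, is where the largeness of $L$ enters. To repair your argument you should either import this linearization lemma, or else actually prove a bound of the shape $\tau(q)^{O(1)}/q + (\text{acceptable error})$ for composite $q$ in $[H^{1/2}, \log^{O_D(L)} N]$, which is a genuinely new estimate not contained in your sketch.
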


This proposition was established in \cite[Theorem 3.18]{tz} in the case when $M$ is a small power of $N$; the point is that $M$ can be lowered to essentially $\log^L N$ for some large $L$.  However, note that the number $J$ of polynomials involved cannot be arbitrarily large depending on $\epsilon_0$.  The main obstruction to reducing the size of the coarse scale $M$ is that we need the ``diagonal'' contributions to \eqref{qjf} (such as those coming from the terms where one of the $Q_j(\vec h)$ vanish) to be negligible when compared to the remaining terms.  The sieve $\nu$ (or powers thereof, such as $\nu^2$) tends to have size $\log^{O(1)} N$ on the average, and using this one expects to control diagonal contributions to \eqref{qjf} by something like $\log^{O(1)} N / M$, which will be negligible when $M$ is a sufficiently large power of $\log N$.

\begin{proof}  We repeat the arguments from \cite[\S 11]{tz}.  For each $\vec h$, we see from \eqref{joe} that
$$
\int_X \prod_{j = 1}^J T^{Q_j(\vec h)} \nu = 1 + o(1) + O\left( \Exp\left( O\left( \sum_{1 \leq j < j' \leq J} \sum_{w \leq p \leq R^{\log R}: p | Q_j(\vec h)-Q_{j'}(\vec h)} \frac{1}{p} \right) \right) \right).$$
Thus it suffices to show that
$$ \E_{\vec h \in \Omega \cap \Z^d} 
\Exp\left( O\left( \sum_{1 \leq j < j' \leq J} \sum_{w \leq p \leq R^{\log R}: p | Q_j(\vec h)-Q_{j'}(\vec h)} \frac{1}{p} \right) \right) = o(1).$$
Using the elementary bound $\Exp(a+b) \ll \Exp(2a) + \Exp(2b)$ repeatedly, it suffices to show that
$$ \E_{\vec h \in \Omega \cap \Z^d} 
\Exp\left( O\left( \sum_{w \leq p \leq R^{\log R}: p | Q_j(\vec h)-Q_{j'}(\vec h)} \frac{1}{p} \right) \right) = o(1).$$
 for each $1 \leq j < j \leq d'$.  

We first dispose of the ``globally bad'' primes, in which $p$ divides the entire polynomial $Q_j-Q_{j'}$.  As $Q_j-Q_{j'}$ is non-constant and has coefficients $O(W^{O(1)})$, we see that the product of all such primes is $O(W^{O(1)})$.  In \cite[Lemma E.3]{tz}, it is shown that
$$ \sum_{p \geq w: p|n} \frac{1}{p} = o(1)$$
for any $n = O(W^{O(1)})$.  Thus the contribution of such primes in the above sum is negligible.

In \cite[Lemma E.1]{tz}, it is shown that
$$ \Exp\left( O\left( \sum_{p \in A} \frac{1}{p} \right) \right) \ll \sum_{p \in A} \frac{\log^{O(1)} p}{p}$$
for any set $A$ of primes.  Thus it suffices to show that
$$
\sum_{w \leq p \leq R^{\log R}} \frac{\log^{O(1)} p}{p} \E_{\vec h \in \Omega \cap \Z^d} 1_{p|Q_j(\vec h)-Q_{j'}(\vec h); p \not | Q_j - Q_{j'}} = o(1).
$$
From \cite[Lemma D.3]{tz}, we conclude that if $p$ does not divide $Q_j - Q_{j'}$, then the average $1_{p|Q_j(\vec h)-Q_{j'}(\vec h)}$ on any cube in $\Z^d$ of sidelength $1 \leq K \leq p$ is $O( \frac{1}{K} )$.   By \cite[Corollary C.2]{tz}, \cite[Lemma C.4]{tz} and the inradius hypothesis we then have
$$
\E_{\vec h \in \Omega \cap \Z^d} 1_{p|h_j-h_{j'}} \ll \frac{1}{p} + H^{-1/2}. 
$$ 
Thus we reduce to showing that
$$
\sum_{w \leq p \leq R^{\log R}} \frac{\log^{O(1)} p}{p^2} + H^{-1/2} \frac{\log^{O(1)} p}{p} = o(1),
$$
but this follows easily from Mertens' theorem (or the prime number theorem) and the definition of $M$ and $w$, if $L$ is large enough.
\end{proof}

\section{Averaged local Gowers norms}

As in \cite{tz}, we will control the left-hand side of \eqref{gg} using some local Gowers norms, which we now define.  Given any $d \geq 2$ and integers $a_1,\dots,a_d$ and any scale $S \geq 1$, we define the \emph{local Gowers uniformity norms} $U^{a_1,\dots,a_d}_S$ by the formula
\begin{equation}\label{usdef}
\begin{split}
\| f \|_{U^{a_1,\dots,a_d}_S}^{2^d} &:= \E_{m_1^{(0)},\dots,m_d^{(0)}, m_1^{(1)},\dots,m_d^{(1)} \in [S]} \int_X \\
&\quad \prod_{(\omega_1,\dots,\omega_d) \in \{0,1\}^d} T^{m_1^{(\omega_1)} a_1 + \dots + m_d^{(\omega_d)} a_d} f
\end{split}
\end{equation}
for $f: X \to \R$.  Next, for any $t \geq 2$ and any $d$-tuple $\vec Q = (Q_1,\dots,Q_d)$ of polynomials $Q_i \in \Z[ \mathbf{h}_1, \dots, \mathbf{h}_t, \mathbf{W}]$ in $t+1$ variables, we define the \emph{averaged local Gowers uniformity norms} $U^{\vec Q([H]^t,W)}_{S}$ on functions $f: X \to \R$ by the formula
\begin{equation}\label{uqs-def}
\| f \|_{U^{\vec Q([H]^t,W)}_S}^{2^d} := \E_{\vec h \in [H]^t} \|f\|_{U^{Q_1(\vec h,W),\dots,Q_d(\vec h,W)}_S}^{2^d}.
\end{equation}
These are indeed norms; see \cite[Appendix A]{tz}.  One can extend these norms to complex-valued functions by inserting an alternating sequence of conjugation symbols in the product, but we will not need to use such an extension here.   We remark that these expressions may also be defined for $d=1$, but are merely seminorms instead of norms in that case.

From the Gowers-Cauchy-Schwarz inequality (see e.g. \cite[Appendix B]{linprimes}) and H\"older's inequality, we record the useful inequality
\begin{equation}\label{gcs}
\begin{split}
&\left|\E_{m_1^{(0)},\dots,m_d^{(0)}, m_1^{(1)},\dots,m_d^{(1)} \in [S]} \int_X \prod_{\omega \in \{0,1\}^d} T^{m_1^{(\omega_1)} a_1 + \dots + m_d^{(\omega_d)} a_d} f_\omega\right|\\
&\quad \leq \prod_{\omega \in \{0,1\}^d} \|f_\omega\|_{U^{a_1,\dots,a_d}_S}^{2^d}
\end{split}
\end{equation}
for any functions $f_\omega: X \to \R$ for $\omega\in \{0,1\}^d$ where we write $\omega := (\omega_1,\dots,\omega_d)$.

In a similar spirit, we have the following basic inequality:

\begin{theorem}[Polynomial generalized von Neumann theorem]\label{pgvn}  Suppose that $\epsilon_0>0$ is a fixed quantity which is sufficiently small depending on $k,P_1,\dots,P_k$, and that $L$ is a fixed quantity which is sufficiently large depending on $k,P_1,\dots,P_k$. 
Then there exists fixed $t \geq 0, d \geq 2$ and a fixed $d$-tuple $\vec Q = (Q_1,\dots,Q_d)$ of polynomials $Q_i \in \Z[ \mathbf{h}_1, \dots, \mathbf{h}_t, \mathbf{W}]$, none of which are identically zero, and which are independent of $\epsilon_0, L$, such that one has the inequality
$$
\left| \Lambda(g_1,\dots,g_k) \right|  \ll \min_{1 \leq i \leq k} \| g_i \|_{U^{\vec Q([H]^t,W)}_{\sqrt{M}}}^c + o(1)
$$
for some fixed $c>0$ and all $g_1,\dots,g_k: X \to \R$ obeying the pointwise bound $|g_i| \leq \nu+1$ for all $i=1,\dots,d$.
\end{theorem}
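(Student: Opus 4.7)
My plan follows the polynomial PET induction scheme of \cite[\S 5--7]{tz}, using Proposition \ref{pfc} to handle the unboundedness of the $g_i$. By symmetry of $\Lambda$ in its arguments, it suffices to bound $|\Lambda(g_1,\dots,g_k)|$ by an averaged Gowers norm of one chosen $g_{i_0}$, the minimum over $i$ then following by repeating the argument for each index. Changing variables $x \mapsto x - P_{i_0}(Wm)/W$ inside the inner integral, I place $g_{i_0}$ in the ``pivot'' position and rewrite
$$\Lambda(g_1,\dots,g_k) = \E_{m \in [M]} \int_X g_{i_0}(x) \prod_{i \neq i_0} T^{R_i(m)} g_i(x) \, dx,$$
where $R_i(m) := (P_i(Wm) - P_{i_0}(Wm))/W$.

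The core argument is a finite sequence of van der Corput / Cauchy-Schwarz steps in the $m$ variable with shift parameter $h \in [H]$. At each step, non-pivot factors are bounded pointwise by $\nu + 1$, and Cauchy-Schwarz is applied to double the number of function factors and introduce one new auxiliary variable $h_\ell \in [H]$. Choosing the pivot polynomial according to the Bergelson-Leibman complexity ordering, the leading terms of the non-pivot polynomials cancel between the copies at $m + h_\ell$ and $m$, so the surviving polynomial system has strictly lower complexity. Iterating this a fixed number of times (depending only on $k, P_1, \dots, P_k$) linearizes all remaining polynomials, and a final block of Cauchy-Schwarz applications assembles the $2^d$-fold product structure defining the local Gowers norm $U^{a_1,\dots,a_d}_{\sqrt{M}}$ with shifts $a_j = Q_j(\vec h, W)$, where $\vec h \in [H]^t$ collects all accumulated auxiliary variables. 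Averaging the Gowers-Cauchy-Schwarz inequality \eqref{gcs} over $\vec h \in [H]^t$, together with H\"older's inequality, then produces the desired bound by $\| g_{i_0} \|_{U^{\vec Q([H]^t, W)}_{\sqrt{M}}}^c$, modulo additional factors coming from translates of $\nu + 1$ that must be shown to contribute $1 + o(1)$ on average.

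To close the argument, the ``garbage'' weights involving products of translates of $\nu + 1$ produced by the Cauchy-Schwarz splittings must integrate to $1 + o(1)$. This is exactly the content of Proposition \ref{pfc}: the polynomials $Q_1, \dots, Q_J$ appearing in these factors form a fixed collection with coefficients of size $O(W^{O(1)})$ and pairwise non-constant differences, with $J$ bounded in terms of $k, P_1, \dots, P_k$ alone. Choosing $\epsilon_0$ small enough and $L$ large enough for Proposition \ref{pfc} to apply at this value of $J$ and at the polylogarithmic scale $M = \log^L N$, each such factor contributes $1 + o(1)$, as required. I expect the main obstacle to be the combinatorial bookkeeping through the PET induction: one must verify that, at each step, the polynomial tuples that arise remain pairwise non-degenerate (pairwise differences non-constant, coefficients $O(W^{O(1)})$, no entry identically zero) and of bounded total count, so that Proposition \ref{pfc} can be invoked uniformly. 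This parallels the bookkeeping in \cite[\S 5--7]{tz}, the essential new point being that Proposition \ref{pfc} continues to apply at the shorter scale $M = \log^L N$ rather than $N^\eps$.
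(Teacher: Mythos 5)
Your proposal is correct and follows essentially the same route as the paper, which simply invokes the PET-induction proof of \cite[Theorem 4.5]{tz} and checks that it survives at the polylogarithmic scales, with Proposition \ref{pfc} replacing the original polynomial forms condition. The only detail worth making explicit beyond your sketch is the paper's observation that the needed scale relations are $(HW)^{O(1)}\sqrt{M}=o(M)$ and $(HWM)^{O(1)}=o(\sqrt{N})$, and that the crude bound \eqref{h1d} handles the boundary error terms from the van der Corput shifts; both hold for $M=\log^L N$, $H=\log^{\sqrt{L}}N$ with $L$ large.
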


\begin{proof}  This is essentially \cite[Theorem 4.5]{tz} (which was proven by a combination of PET induction, the Cauchy-Schwarz inequality, and the polynomial forms condition), with the only difference being that $H$ and $M$ are now polylogarithmic in $N$, rather than polynomial in $N$.  However, an inspection of the proof of \cite[Theorem 4.5]{tz} shows that this does not affect the arguments (after replacing the polynomial forms condition used there with Proposition \ref{pfc}, of course); the key relationships between $H,M,N$ that are used in the proof are that $(HW)^{O(1)} \sqrt{M} = o(M)$ and that $(HWM)^{O(1)} = o(\sqrt{N})$, where the implied constants depend only on $k,P_1,\dots,P_k$ (and in particular are independent of $\epsilon_0, L$), and these properties are certainly obeyed for the choice of $H$ and $M$ used here.  (The bound \eqref{h1d} is sufficient to deal with all the error terms arising from use of the van der Corput inequality in this regime.)  
\end{proof}

Setting $g_2=\dots=g_k=1$, we obtain in particular that
$$ \left|\int_X g_1\right| \ll \|g_1\|_{U^{\vec Q([H]^t,W)}_{\sqrt{M}}}^c + o(1).$$
(In fact, one can take $c=1$ in this inequality by the standard monotonicity properties of the Gowers norms, see \cite[Lemma A.3]{tz}, but we will not need this improvement here.)

In view of this theorem, Theorem \ref{abf} is now a consequence of the following claim (after replacing $\eps$ with $\eps^c$):

\begin{theorem}[Approximation by bounded function, again]\label{abfa}  Let $d \geq 2$ and $t \geq 0$ be fixed, and let $\vec Q = (Q_1,\dots,Q_d)$ be a fixed $d$-tuple of polynomials $Q_i \in \Z[ \mathbf{h}_1, \dots, \mathbf{h}_t, \mathbf{W}]$, not identically zero.  Let $\epsilon_0>0$ be a fixed quantity that is sufficiently small depending on $d,t,\vec Q$.  Let $\eps > 0$ be fixed, and let $L$ be a fixed quantity that is sufficiently large depending on $d,t,\vec Q,\epsilon_0,\eps$.  Let $\nu$ be as above, and let $f: X \to \R$ obey the pointwise bound
$$ 0 \leq f \leq \nu.$$
Then there exists $g: X \to \R$ with the pointwise bound
$$ 0 \leq g \leq 1,$$
such that
\begin{equation}\label{fgu}
 \|f-g\|_{U^{\vec Q([H]^t,W)}_{\sqrt{M}}} \ll \eps + o(1).
\end{equation}
\end{theorem}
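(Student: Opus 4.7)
The plan is to prove Theorem \ref{abfa} by a Koopman--von Neumann dense-model argument in which the traditional correlation condition on $\nu$ is replaced by the densification technique of Conlon--Fox--Zhao. The central point is that Proposition \ref{pfc} controls only linear forms (averaged products of shifts of $\nu$), and the densification step is expressly designed to invoke nothing beyond this.

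The iteration goes as follows. Start with the trivial $\sigma$-algebra $\mathcal{B}_0$. At stage $n$, if $\|f - \E(f\mid\mathcal{B}_n)\|_{U^{\vec Q([H]^t,W)}_{\sqrt M}} > \eps/2$, then the averaged dual function $D_n$ associated to $f - \E(f\mid\mathcal{B}_n)$ with respect to this norm (obtained by fixing one vertex of the Gowers cube in \eqref{usdef}--\eqref{uqs-def} and averaging the resulting product) correlates with the residual, and I adjoin an $\eps^{O(1)}$-scale discretization of $D_n$ to form $\mathcal{B}_{n+1}$. Each stage raises the energy $\int_X \E(f\mid\mathcal{B}_n)^2$ by $\gg \eps^{O(1)}$. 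The CFZ densification lets me show $\E(\nu \mid \mathcal{B}_n) = 1 + o(1)$ in $L^2(X)$ at every stage --- by unfolding the conditional expectation against the finitely many dual functions generating $\mathcal{B}_n$, one reduces to a bounded combination of linear forms in $\nu$ of the type controlled by Proposition \ref{pfc} --- which keeps the total energy $O(1)$. Hence the iteration terminates after $O_\eps(1)$ stages, producing $g_0 := \E(f\mid\mathcal{B})$ with $\|f - g_0\|_{U^{\vec Q([H]^t,W)}_{\sqrt M}} \leq \eps/2$.

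The function $g_0$ is only majorized by $\E(\nu\mid\mathcal{B})$, not by $1$; but since $\|\E(\nu\mid\mathcal{B}) - 1\|_{L^2(X)} = o(1)$ by the densification bound above, the truncation $g := \min(g_0,1)$ differs from $g_0$ by at most $O(\eps) + o(1)$ in $L^2(X)$. A Gowers--Cauchy--Schwarz application via \eqref{gcs}, combined with the crude moment bound \eqref{h1d} to absorb the unbounded parts of $\nu$, then transfers this $L^2$ closeness to closeness in the averaged local Gowers norm, yielding \eqref{fgu}.

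The principal technical obstacle is verifying that every polynomial tuple encountered when one expands these dual functions and their Gowers inner products remains in the regime handled by Proposition \ref{pfc}: all pairwise differences of the shifted $Q_i$-polynomials must be non-constant with coefficients of size $W^{O(1)}$, and the relevant convex bodies (in the Gowers cube and the $\vec h$-average) must have inradius at least $H^{1/2}$. This intricate PET-style verification is essentially the same bookkeeping done in \cite{tz}, but must now be rechecked at the polylogarithmic scales $M = \log^L N$ and $H = \log^{\sqrt L} N$; it is precisely the relations $(HW)^{O(1)} \sqrt M = o(M)$ and $(HWM)^{O(1)} = o(\sqrt N)$ (already exploited in the proof of Theorem \ref{pgvn}) that make everything go through.
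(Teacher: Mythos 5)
Your architecture (energy increment against dual functions of the residual, then truncate the model) is the older route of \cite{gt-primes} and \cite{tz}, not the one the paper takes, and it founders on exactly the obstacle that the paper's decomposition is designed to avoid. In your iteration, $\mathcal{B}_{n+1}$ is generated by the dual function $D_n$ of $f - \E(f\mid\mathcal{B}_n)$, which is a dual function of an \emph{unbounded} ($\nu$-dominated) function. Verifying $\E(\nu\mid\mathcal{B}_n) = 1 + o(1)$ then requires estimating $\int_X(\nu-1)\prod_{k\le K} D_k$ where each $D_k$ expands into a product of $2^d-1$ shifted copies of $\nu$-dominated functions; after unfolding one faces averaged products of roughly $K(2^d-1)+1$ shifted copies of $\nu$. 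Proposition \ref{pfc} (and the underlying bound \eqref{joe}) controls such products only when $\epsilon_0$ is small depending on the number $J$ of factors --- morally $J \lesssim 1/\epsilon_0$, since the sieve level is $R=N^{\epsilon_0}$. Your iteration length $K = O_\eps(1)$ grows as $\eps \to 0$, whereas in the theorem $\epsilon_0$ is fixed \emph{before} $\eps$ (and must be: the normalization $\epsilon_0/10$ in \eqref{fdef} feeds into the lower bound $c_0$ against which $\eps$ is eventually taken small). So for small $\eps$ your linear forms fall outside the reach of Proposition \ref{pfc}, and invoking ``CFZ densification'' at this point begs the question: densification is not a device for evaluating correlations of $\nu$ with dual functions of unbounded functions. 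This is precisely the ``correlating the enveloping sieve with dual functions of unbounded functions'' difficulty that the introduction flags as the reason the direct \cite{tz} argument does not extend to polylogarithmic scales.

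The paper instead splits the task in two. First (Theorem \ref{abfa-weak}) it applies the abstract dense model theorem of \cite{reingold} with the family $\mathcal{F}$ consisting of dual functions $\mathcal{D}(F_\omega)$ of \emph{1-bounded} functions only; the required input (Theorem \ref{orthodual}) then has a single factor of $\nu-1$ against bounded objects, and after a clearing-denominators shift and Gowers--Cauchy--Schwarz it reduces to $\|\nu-1\|_{U^{D_1,\dots,D_d}_{M^{1/4}}}=o(1)$, i.e.\ only $2^{d}$-fold products of shifts of $\nu$, a count independent of $\eps$ (only the scale bookkeeping forces $L$ to depend on $K$, hence on $\eps$). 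Second (Theorem \ref{dens}), the densification argument --- an induction replacing $\nu$-bounded factors one at a time by $1$-bounded ones via Cauchy--Schwarz and Proposition \ref{pfc} --- upgrades ``$f-g$ nearly orthogonal to all dual functions of bounded functions'' to the Gowers-norm bound \eqref{fgu}. To repair your proof you would have to restrict the increments to bounded dual functions and then still prove something like Theorem \ref{dens}; as written, the step $\E(\nu\mid\mathcal{B}_n)=1+o(1)$ is a genuine gap. (Your final truncation also needs care: passing from $L^2$-closeness of $g_0$ and $\min(g_0,1)$ to closeness in the averaged local Gowers norm is not a direct consequence of \eqref{gcs} and \eqref{h1d} for $\nu$-dominated functions, and is again most cleanly handled by the orthogonality-plus-densification route.)
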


It remains to establish Theorem \ref{abfa}.  This is the objective of the remaining sections of the paper.

\begin{remark} As before, if one could remove the dependence of $L$ on $\eps$ in Theorem \ref{abfa}, then one could remove the dependence of $L$ on $\delta$ in Theorem \ref{szpp-short}.  Also, from this point on the number $k$ of polynomials $P_1,\dots,P_k$ in Theorem \ref{szpp-short} no longer plays a role, and we will use the symbol $k$ to denote other (unrelated) natural numbers.
\end{remark}

\section{The dense model theorem}

Let $d,t,\vec Q$ be as in Theorem \ref{abfa}.  The averaged local Gowers norm $\| f \|_{U^{\vec Q([H]^t,W)}_{\sqrt{M}}}$ of a function $f: X \to \R$ can then be expressed by the identity
\begin{equation}\label{udual}
\| f \|_{U^{\vec Q([H]^t,W)}_{\sqrt{M}}}^{2^d} = \int f {\mathcal D} f
\end{equation}
where the \emph{dual function} ${\mathcal D} f = {\mathcal D}^{\vec Q([H]^t,W)}_{\sqrt{M}} f: X \to \R$ is defined by the formula
\begin{equation}\label{ddef}
\begin{split}
{\mathcal D} f &:= \E_{\vec h \in [H]^t} \E_{m_1^{(0)},\dots,m_d^{(0)}, m_1^{(1)},\dots,m_d^{(1)} \in [\sqrt{M}]} \\
&\quad \prod_{(\omega_1,\dots,\omega_d) \in \{0,1\}^d \backslash \{0\}^d} T^{\sum_{i=1}^d (m_i^{(\omega_i)} - m_i^{(0)}) Q_i(\vec h)} f
\end{split}
\end{equation}
More generally, we define
\begin{equation}\label{ddef-gen}
\begin{split}
{\mathcal D} (f_\omega)_{\omega \in \{0,1\}^d \backslash \{0\}^d} &:= \E_{\vec h \in [H]^t} \E_{m_1^{(0)},\dots,m_d^{(0)}, m_1^{(1)},\dots,m_d^{(1)} \in [\sqrt{M}]} \\
&\quad \prod_{(\omega_1,\dots,\omega_d) \in \{0,1\}^d \backslash \{0\}^d} T^{\sum_{i=1}^d (m_i^{(\omega_i)} - m_i^{(0)}) Q_i(\vec h)} f_\omega
\end{split}
\end{equation}
for any tuple of functions $f_\omega: X \to \R$ for $\omega \in \{0,1\}^d \backslash \{0\}^d$.

Theorem \ref{abfa} is then an immediate consequence of combining the following two theorems (with the function $f$ appearing in Theorem \ref{dens} being replaced by the function $f-g$ appearing in Theorem \ref{abfa-weak}).

\begin{theorem}[Weak approximation by bounded function]\label{abfa-weak}  Let $d \geq 2$ and $t \geq 0$ be fixed, and let $\vec Q = (Q_1,\dots,Q_d)$ be a fixed $d$-tuple of polynomials $Q_i \in \Z[ \mathbf{h}_1, \dots, \mathbf{h}_t, \mathbf{W}]$, not identically zero.  Let $\epsilon_0>0$ be a fixed quantity that is sufficiently small depending on $d,t,\vec Q$.  Let $\eps > 0$ be fixed, and let $L$ be a fixed quantity that is sufficiently large depending on $d,t,\vec Q,\epsilon_0,\eps$.  Let $\nu$ be as above, and let $f: X \to \R$ obey the pointwise bound
$$ 0 \leq f \leq \nu.$$
Then there exists $g: X \to \R$ with the pointwise bound
$$ 0 \leq g \leq 1,$$
such that
\begin{equation}\label{fgu-weak}
 \left|\int_X (f-g) {\mathcal D} (F_\omega)_{\omega \in \{0,1\}^d \backslash \{0\}^d}\right| \ll \eps + o(1)
\end{equation}
for all functions $F_\omega: X \to \R$ for $\omega \in \{0,1\}^d \backslash \{0\}^d$ with the pointwise bounds $-1 \leq F_\omega \leq 1$.
\end{theorem}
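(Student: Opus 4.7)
The plan is to prove the stronger claim that $\|f - g\|_U \ll \eps + o(1)$ for some $0 \le g \le 1$, where $U := U^{\vec Q([H]^t,W)}_{\sqrt{M}}$. Given such $g$, the conclusion of the theorem follows immediately from the Gowers--Cauchy--Schwarz inequality \eqref{gcs}: by the translation-invariance manipulation used to establish \eqref{udual}, the integral $\int_X (f - g)\, {\mathcal D}(F_\omega)_{\omega \ne 0^d}$ can be rewritten as a Gowers cube inner product with $f - g$ inserted in the $\omega = 0^d$ slot, so \eqref{gcs} yields
$$ \Big|\int_X (f - g)\,{\mathcal D}(F_\omega)_{\omega \ne 0^d}\Big| \le \|f - g\|_U \prod_{\omega \ne 0^d} \|F_\omega\|_U \le \eps + o(1), $$
using that $|F_\omega| \le 1$ implies $\|F_\omega\|_U \le 1$.

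To produce such a $g$, apply a dense model theorem in the spirit of Green--Tao and Tao--Ziegler, as streamlined by the Conlon--Fox--Zhao densification method. The pseudorandomness hypothesis needed is $\|\nu - 1\|_U = o(1)$, and the output is a function $0 \le g \le 1$ with $\|f - g\|_U \ll \eps + o(1)$. In the Hahn--Banach formulation, one argues by contradiction: one separates $f$ from the convex set $\{g : 0 \le g \le 1\}$ in the $U$-seminorm by a functional realized as a convex combination $\phi$ of basic anti-uniform functions ${\mathcal D}(F_\omega)$ with $|F_\omega| \le 1$ (each of which satisfies $|{\mathcal D}(F_\omega)| \le 1$ pointwise), and then uses Weierstrass approximation of $t \mapsto t^+$ together with iterated Gowers--Cauchy--Schwarz to reduce the required pseudorandomness estimate to $\|\nu - 1\|_U = o(1)$.

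To verify $\|\nu - 1\|_U = o(1)$, expand $\|\nu - 1\|_U^{2^d}$ using the multilinearity of the Gowers inner product \eqref{usdef}, obtaining a signed sum indexed by $S \subseteq \{0,1\}^d$ of terms
$$ \E_{\vec h \in [H]^t}\, \E_{\vec m^{(0)}, \vec m^{(1)} \in [\sqrt{M}]^d} \int_X \prod_{\omega \in S} T^{\sum_{i=1}^d m_i^{(\omega_i)} Q_i(\vec h, W)} \nu. $$
For distinct $\omega, \omega' \in \{0,1\}^d$, the polynomials $\sum_i m_i^{(\omega_i)} Q_i$ and $\sum_i m_i^{(\omega'_i)} Q_i$ (as polynomials in $\vec h, \vec m^{(0)}, \vec m^{(1)}$) differ in the variables $m_i^{(\bullet)}$ for some index $i$ with $\omega_i \ne \omega'_i$, and since each $Q_i$ is not identically zero, this difference is non-constant in the joint variables. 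Proposition~\ref{pfc} therefore gives $1 + o(1)$ for each term, and the leading constants cancel via $\sum_{S \subseteq \{0,1\}^d} (-1)^{|\{0,1\}^d \setminus S|} = (1 - 1)^{2^d} = 0$, yielding $\|\nu - 1\|_U = o(1)$.

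The main obstacle is the dense-model / Hahn--Banach step itself, executed in the setting of the averaged local Gowers norm rather than a standard Gowers norm on $\Z/N\Z$. The Weierstrass polynomial approximating $t\mapsto t^+$ has degree $j = j(\eps)$, so one must bound $|\int (\nu - 1) \prod_{l=1}^j \psi_l|$ uniformly for $\psi_l \in \Psi$; iterated application of \eqref{gcs} (absorbing successive dual functions into a combined Gowers cube structure) reduces this to $\|\nu - 1\|_U$, but accounting correctly for the polynomial forms condition with a sufficient number of forms $J = J(j, d, t, \vec Q)$ is delicate, and is precisely why the parameter $L$ in the statement must be taken large depending on $\eps$, while $\eps_0$ can be kept dependent only on $d, t, \vec Q$ by choosing it small enough to accommodate all forms arising from the fixed-degree Gowers-cube test products. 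The Conlon--Fox--Zhao densification scheme organizes this bookkeeping iteratively in a way that at each step only invokes pseudorandomness against a single dual function, avoiding explicit Weierstrass truncation altogether.
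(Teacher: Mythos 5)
Your proposal inverts the logical structure of the paper in a way that creates a genuine gap. You propose to first obtain $g$ with $\|f-g\|_{U^{\vec Q([H]^t,W)}_{\sqrt{M}}} \ll \eps + o(1)$ and then deduce \eqref{fgu-weak} by Gowers--Cauchy--Schwarz. That deduction is the easy direction of \eqref{gcs} and is fine, but the stronger claim you start from is exactly Theorem \ref{abfa}, which the paper only obtains by combining Theorem \ref{abfa-weak} with the separate densification step (Theorem \ref{dens}). The gap sits in your Hahn--Banach step: to separate a merely $\nu$-bounded $f$ from $\{g: 0\le g\le 1\}$ in the $U$-seminorm, the functional that Hahn--Banach produces lies in the dual ball of that seminorm, and for $\nu$-bounded inputs the natural witnesses of largeness of $\|h\|_U$ are dual functions ${\mathcal D}h$ of \emph{unbounded} functions; you cannot simply assert that the separating functional is a convex combination of ${\mathcal D}(F_\omega)$ with $|F_\omega|\le 1$. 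Proving that correlation against bounded dual functions already controls the $U$-norm of a $\nu$-bounded function \emph{is} the densification theorem, which your argument silently assumes. Relatedly, the pseudorandomness input you verify, $\|\nu-1\|_U = o(1)$, is not a sufficient hypothesis for a dense model theorem whose conclusion is $\|f-g\|_U \le \eps$: one needs $\nu-1$ to be nearly orthogonal to \emph{products} of the test functions, which in your setup would be products of dual functions of unbounded functions --- precisely the ``correlation/dual function condition'' that this paper, following Conlon--Fox--Zhao, is structured to avoid.

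The second gap is that you never prove the estimate that carries the weight in the paper's actual proof, namely Theorem \ref{orthodual}: $\int_X(\nu-1)\prod_{k=1}^K {\mathcal D}(F_{k,\omega}) = o(1)$ for bounded $F_{k,\omega}$ and any fixed $K$. Your remark that iterated application of \eqref{gcs} reduces such expressions to $\|\nu-1\|_U$ does not work as stated: the $K$ dual functions are averages over independent parameters $\vec h_1,\dots,\vec h_K\in[H]^t$, so their shift sets $Q_i(\vec h_k)$ are incompatible and cannot be absorbed into a single Gowers cube structure for the norm $U$. The paper handles this by discarding degenerate tuples via Schwarz--Zippel and then ``clearing denominators'' --- passing to the common multiples $D_i = \prod_k|Q_i(\vec h_k)|$ at the smaller scale $M^{1/4}$ --- which reduces matters to $\|\nu-1\|_{U^{D_1,\dots,D_d}_{M^{1/4}}} = o(1)$, a \emph{different} local Gowers norm; this is also exactly where $L$ must be taken large depending on $K$ and hence on $\eps$. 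Your sketch attributes the $L$-dependence to roughly the right place but supplies no argument for it. The correct skeleton is: apply the dense model theorem with test family equal to the \emph{bounded} dual functions ${\mathcal D}(F_\omega)$, which yields \eqref{fgu-weak} directly once Theorem \ref{orthodual} is established, and defer the upgrade to the $U$-norm to Theorem \ref{dens}.
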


\begin{theorem}[Densification]\label{dens} Let $d \geq 2$ and $t \geq 0$ be fixed, and let $\vec Q = (Q_1,\dots,Q_d)$ be a fixed $d$-tuple of polynomials $Q_i \in \Z[ \mathbf{h}_1, \dots, \mathbf{h}_t, \mathbf{W}]$, not identically zero.  Let $\epsilon_0>0$ be a fixed quantity that is sufficiently small depending on $d,t,\vec Q$.  Let $\eps > 0$ be fixed, and let $L$ be a fixed quantity that is sufficiently large depending on $d,t,\vec Q,\epsilon_0$.  Let $\nu$ be as above, and let $f: X \to \R$ obey the pointwise bound
$$ |f| \leq \nu+1,$$
and suppose that
\begin{equation}\label{dons}
 \left|\int_X f {\mathcal D} (F_\omega)_{\omega \in \{0,1\}^d \backslash \{0\}^d}\right| \ll \eps + o(1) 
\end{equation}
for all functions $F_\omega: X \to \R$ with the pointwise bounds $-1 \leq F_\omega \leq 1$.  Then we have
$$ \| f \|_{U^{\vec Q([H]^t,W)}_{\sqrt{M}}} \ll \eps^c + o(1)$$
for some fixed $c>0$ (independent of $\eps$).
\end{theorem}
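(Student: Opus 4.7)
The plan is to follow the densification method of Conlon--Fox--Zhao, iteratively replacing each of the $2^d-1$ occurrences of $f$ inside the dual function $\mathcal{D}$ with a bounded function, so that after all replacements the hypothesis \eqref{dons} provides the required control. Concretely, rewrite $\|f\|_{U^{\vec Q([H]^t,W)}_{\sqrt M}}^{2^d} = \int_X f \cdot \mathcal{D}(f,\dots,f)$ via \eqref{udual}, enumerate $\{0,1\}^d \setminus \{0^d\}$ as $\omega^{(1)},\dots,\omega^{(2^d-1)}$, and construct bounded functions $G^{(1)},\dots,G^{(2^d-1)} \colon X \to [-1,1]$ so that replacing $f$ at position $\omega^{(j)}$ by $G^{(j)}$ introduces an error of size $O(\epsilon)+o(1)$. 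Summing the telescoping errors and applying \eqref{dons} to $\int_X f \cdot \mathcal{D}(G^{(1)},\dots,G^{(2^d-1)})$ then yields $\|f\|_U^{2^d} \ll \epsilon + o(1)$, hence $c = 1/2^d$ suffices.

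The single-step task is, given bounded $G^{(1)},\dots,G^{(j-1)}$, to exhibit $G^{(j)}$ with $|G^{(j)}| \le 1$ such that
$$
E_j := \Bigl|\int_X f \cdot \mathcal{D}\bigl(G^{(1)},\dots,G^{(j-1)},\,f - G^{(j)},\,f,\dots,f\bigr)\Bigr| \ll \epsilon + o(1).
$$
I plan to choose $G^{(j)} = \mathbb{E}[f \mid \mathcal{B}^{(j)}]$, the conditional expectation of $f$ onto a $\sigma$-algebra $\mathcal{B}^{(j)}$ built by an energy-increment: starting from the trivial algebra, at each stage I search for a function $\Phi$ drawn from a fixed basis of dual functions of bounded inputs (the basis arising naturally from $E_j$ via a change of variable that isolates $f - G^{(j)}$) with $|\langle f - \mathbb{E}[f \mid \mathcal{B}], \Phi \rangle| \ge \epsilon$, and I adjoin the level sets of $\Phi$ to $\mathcal{B}$. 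The $L^2$-energy of $\mathbb{E}[f \mid \mathcal{B}]$ is bounded since $|f| \le \nu + 1$ and the polynomial forms condition (Proposition \ref{pfc}) controls the second moment of $\nu$; the procedure terminates in $O(\epsilon^{-2})$ steps, and upon termination $f - G^{(j)}$ has correlation at most $\epsilon$ with every basis function, which is exactly what is needed to bound $E_j$.

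The main obstacle is to verify that $E_j$ can, after Cauchy--Schwarz manipulations, be rewritten as $|\int_X (f - G^{(j)}) \cdot \Psi|$ where $\Psi$ lies in (an average of) dual functions of bounded inputs --- this is what licenses the energy-increment construction above. A change of integration variable $x \mapsto y = x - \sum_i (m_i^{(\omega_i^{(j)})} - m_i^{(0)}) Q_i(\vec h)$ factors out $(f - G^{(j)})(y)$, leaving an average over $\vec h, \vec m$ of shifted products in which $f$ still appears at the outer position and at positions $\omega^{(j+1)},\dots$, so $\Psi$ is not immediately of the bounded-input form. To handle these remaining unbounded factors, I would apply the densification idea one level deeper, using the Gowers--Cauchy--Schwarz inequality \eqref{gcs} together with the polynomial forms condition to replace each unbounded $f$ inside $\Psi$ by its bounded approximant with negligible error --- this nested densification is precisely what dispenses with the pointwise dual function condition of \cite{tz,gt-primes}. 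Care must be taken that the number of basis dual functions generated over the course of the $2^d - 1$ outer iterations and their nested densifications remains $O_\epsilon(1)$, so that Proposition \ref{pfc} applies uniformly as $N \to \infty$ and each error genuinely contributes only $O(\epsilon) + o(1)$.
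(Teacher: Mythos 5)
Your high-level plan---iteratively trading the unbounded copies of $f$ inside the dual function for bounded functions until \eqref{dons} applies---is the right reading of densification, but the mechanism you propose for the single replacement step has genuine gaps, and it is exactly there that the real work lies. You take $G^{(j)} = \E[f \mid \mathcal{B}^{(j)}]$ from an energy increment against dual functions of \emph{bounded} inputs. Two problems. First, since only $|f|\le\nu+1$ is assumed, the conditional expectation $\E[f\mid\mathcal{B}^{(j)}]$ is not pointwise bounded by $1$; forcing it to be is the content of the dense model theorem and would require correlating $\nu-1$ against products of your basis functions, which here are dual functions with unbounded inputs---precisely the obstruction this theorem is designed to circumvent. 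Second, and more fundamentally, the orthogonality the energy increment gives you (small correlation of $f-G^{(j)}$ with dual functions of \emph{bounded} inputs) does not control $E_j$: as you note yourself, after isolating $f-G^{(j)}$ the companion function $\Psi$ still contains $f$ at the outer position and at positions $j+1,\dots,2^d-1$. Your proposed repair---``apply the densification idea one level deeper'' to replace those copies of $f$ ``with negligible error''---is circular as stated: the error of such a replacement is an expression of the same type as $E_j$, no well-founded recursion is set up, and in fact those errors are not negligible but only $O(\eps^c)$; proving even that is the whole theorem.

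The paper's proof constructs no approximants at all and organizes the recursion differently: it inducts on the number of factors bounded by $\nu$ (rather than by $1$) in the general multilinear form, and at each step the $\nu$-bounded factor to be eliminated is permuted to the \emph{outer} position, so that one must bound $\int_X f_{\{0\}^d}\,{\mathcal D}\vec f$ with $|f_{\{0\}^d}|\le\nu$. Cauchy--Schwarz in $L^2(\nu)$ reduces this to $\int_X\nu\,({\mathcal D}\vec f)^2$; the estimate $\int_X(\nu-1)({\mathcal D}\vec f)^2=o(1)$ follows from the Gowers--Cauchy--Schwarz inequality \eqref{gcs} applied to the doubled tuple $\vec Q\oplus\vec Q$ together with Proposition \ref{pfc}; and $\int_X({\mathcal D}\vec f)^2$ is then controlled by H\"older interpolation between the fourth-moment bound $\int_X({\mathcal D}\vec f)^4\ll1$ (again Proposition \ref{pfc}) and $\int_X|{\mathcal D}\vec f|=\sup_{|g|\le1}\left|\int_X g\,{\mathcal D}\vec f\right|$, which is an instance of the same form with one fewer $\nu$-bounded factor. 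Note that the losses accumulate as multiplicative powers of $\eps$ through the H\"older steps, not as additive $O(\eps)$ telescoping errors, which is why the conclusion is $\eps^c$ for a small fixed $c$ rather than your claimed $c=1/2^d$. To repair your argument, replace the energy-increment construction of $G^{(j)}$ by this Cauchy--Schwarz / pseudorandomness / $L^4$--$L^1$ duality loop.
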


We prove Theorem \ref{abfa-weak} in this section, and Theorem \ref{dens} in the next section.

To prove Theorem \ref{abfa-weak}, we invoke the dense model theorem, first established implicitly in \cite{gt-primes} and then made more explicit in \cite{tz}, \cite{gowers}, \cite{reingold}.  We use the formulation from \cite[Theorem 1.1]{reingold}:

\begin{theorem}[Dense model theorem]  For every $\eps > 0$, there is $K = (1/\eps)^{O(1)}$ and $\eps' > 0$ such that, whenever ${\mathcal F}$ is a set of bounded functions from $X$ to $[-1,1]$, and $\nu: X \to \R^+$ obeys the bound
$$ |\int_X (\nu-1) F_1 \dots F_{K'}| \leq \eps'$$
for all $0 \leq K' \leq K$ and $F_1,\dots,F_{K'} \in {\mathcal F}$, and every function $f: X \to \R$ with $0 \leq f \leq \nu$, one has a function $g: X \to [0,1]$ such that 
$$ |\int_X (f-g) F| \leq \eps$$
for all $F \in {\mathcal F}$.
\end{theorem}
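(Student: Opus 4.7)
The plan is to prove this via the Hahn--Banach / minimax approach of Reingold, Trevisan, Tulsiani and Vadhan, which yields $K = (1/\eps)^{O(1)}$ in the cleanest fashion. I argue by contradiction, with three moving parts: a duality step producing a single ``bad'' test function $\phi$; a transfer from $f$ to $\nu$ by choosing an extremal comparison function $g$; and a polynomial approximation that converts $\phi$ into a sum of products of at most $K$ functions from $\mathcal{F}$.

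First, suppose no $g \colon X \to [0,1]$ satisfies $|\int_X (f-g)F| \leq \eps$ for every $F \in \mathcal{F}$. Let $\mathcal{C}$ be the convex hull of $\mathcal{F} \cup (-\mathcal{F})$ inside $\R^X$; every $\phi \in \mathcal{C}$ is a convex combination of signed $F$'s, so $\phi(x) \in [-1,1]$ pointwise. The assumption says: for every $g \in [0,1]^X$ there is $\phi \in \mathcal{C}$ with $\int_X (f-g)\phi > \eps$. Since $[0,1]^X$ and $\mathcal{C}$ are convex and $[0,1]^X$ is compact, the von Neumann minimax theorem produces a \emph{single} $\phi \in \mathcal{C}$ with $\int_X (f-g)\phi > \eps$ simultaneously for all $g \in [0,1]^X$.

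Second, I would optimise in $g$ by choosing $g := 1_{\{\phi > 0\}}$, so that $\int_X g\phi = \int_X \phi_+$ where $\phi_+ := \max(\phi,0)$. Since $f \geq 0$ and $\phi \leq \phi_+$ pointwise, $\int_X f\phi \leq \int_X f\phi_+$, yielding
\[\int_X (f-1)\phi_+ > \eps.\]
The domination $f \leq \nu$ together with $\phi_+ \geq 0$ upgrades this to $\int_X (\nu-1)\phi_+ > \eps$, transferring the problem from $f$ to $\nu$. Next, approximate $t \mapsto t_+$ uniformly on $[-1,1]$ by a polynomial $P(t) = \sum_{k=0}^K a_k t^k$ of degree $K = O(1/\eps)$ (Jackson's theorem for Lipschitz functions, or an explicit Chebyshev expansion), with $\|P - t_+\|_{L^\infty[-1,1]} \leq \eps/8$ and $\sum_k |a_k| \leq B(K)$ for some effective constant. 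The $K'=0$ case of the hypothesis gives $\int_X \nu = 1 + O(\eps')$, so
\[\int_X (\nu-1) P(\phi) > \eps - \tfrac{\eps}{8}\bigl(2 + O(\eps')\bigr) > \eps/2\]
for $\eps'$ small. On the other hand, $\phi^k$ is a convex combination of $k$-fold products $\pm F_{i_1}\cdots F_{i_k}$ with $F_{i_j} \in \mathcal{F}$, so applying the pseudorandomness hypothesis to each such product gives $|\int_X (\nu-1) P(\phi)| \leq B(K)\eps'$. Choosing $\eps' := \eps/(3B(K))$ then produces the required contradiction.

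The main obstacle is the minimax step, since $\mathcal{F}$ may be infinite and one must separate $f$ from the convex set of ``good'' $g$'s by a single test function drawn from the convex hull of $\pm \mathcal{F}$. On the finite space $X$ this is standard LP duality, but one must be careful that the separation is strict and uniform, which is why the minimax (rather than plain Hahn--Banach) framing is natural. A secondary technical point is the coefficient bound $B(K)$ in the polynomial approximation of $t_+$: the standard constructions yield $B(K) = e^{O(K)}$, which is harmless because only $K$ is constrained to be polynomial in $1/\eps$ in the statement, while $\eps'$ may be taken arbitrarily small (here $\eps' = e^{-O(1/\eps)}$).
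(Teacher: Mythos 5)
Your argument is correct, and it is essentially the standard proof of this result: the paper itself gives no proof but simply quotes the formulation from Reingold--Trevisan--Tulsiani--Vadhan, whose argument (also Gowers' Hahn--Banach version) is exactly your duality/minimax step, the thresholding choice $g = 1_{\{\phi>0\}}$ transferring the bound from $f$ to $\nu$, and the polynomial approximation of $t \mapsto \max(t,0)$ expanded into products of at most $K$ test functions. The quantitative conclusions ($K = O(1/\eps)$, $\eps' = e^{-O(1/\eps)}$) are consistent with the stated $K = (1/\eps)^{O(1)}$ and unconstrained $\eps' > 0$.
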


This reduces Theorem \ref{abfa-weak} to the following calculation:

\begin{theorem}[Orthogonality to dual functions]\label{orthodual}  Let $d \geq 2$ and $t \geq 0$ be fixed, and let $\vec Q = (Q_1,\dots,Q_d)$ be a fixed $d$-tuple of polynomials $Q_i \in \Z[ \mathbf{h}_1, \dots, \mathbf{h}_t, \mathbf{W}]$, not identically zero.  Let $\epsilon_0>0$ be a fixed quantity that is sufficiently small depending on $d,t,\vec Q$.  Let $K > 0$ be a fixed integer, and let $L$ be a fixed quantity that is sufficiently large depending on $d,t,\vec Q,\epsilon_0,K$.  Let $\nu$ be as above.  Then one has
\begin{equation}\label{dual-ortho}
\int_X (\nu-1) \prod_{k=1}^K {\mathcal D} (F_{k,\omega})_{\omega \in \{0,1\}^d \backslash \{0\}^d} = o(1)
\end{equation}
for all functions $F_{k,\omega}: X \to \R$ for $k=1,\dots,K$, $\omega \in \{0,1\}^d \backslash \{0\}^d$ with the pointwise bounds $-1 \leq F_{k,\omega} \leq 1$.
\end{theorem}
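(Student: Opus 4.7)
The plan is to reduce \eqref{dual-ortho} to the polynomial forms condition (Proposition \ref{pfc}) via iterated Gowers-Cauchy-Schwarz and multilinear expansion, following the template of \cite{tz, cfz}.

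First, I would expand the product of dual functions using \eqref{ddef-gen}, writing the LHS of \eqref{dual-ortho} as
$$\E_{(\vec h_k, \vec m_k)_{k=1}^K} \int_X (\nu-1)(x) \prod_{k=1}^K \prod_{\omega \in \{0,1\}^d \backslash \{0\}^d} F_{k,\omega}\bigl(x + \phi_{k,\omega}(\vec h_k,\vec m_k)\bigr),$$
where $\vec h_k \in [H]^t$, $\vec m_k = (m_{k,i}^{(\epsilon)})_{i\in[d],\epsilon\in\{0,1\}} \in [\sqrt M]^{2d}$, and
$$\phi_{k,\omega}(\vec h_k, \vec m_k) := \sum_{i=1}^d (m_{k,i}^{(\omega_i)} - m_{k,i}^{(0)})\,Q_i(\vec h_k, W).$$

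Second, I would iteratively apply the Gowers-Cauchy-Schwarz inequality \eqref{gcs} once for each $k = 1, \dots, K$. At step $k$, I fix all parameters except $(\vec h_k, \vec m_k)$ and view the rest as a function $G_k(x)$ occupying the $\omega = 0^d$ corner of the $\vec m_k$-cube. Applying \eqref{gcs} with $F_{k,\omega}$ at the remaining corners and using $\|F_{k,\omega}\|_U \leq 1$ (since $|F_{k,\omega}| \leq 1$), the $F_{k,\omega}$-factors drop out and $G_k$ is replaced by a $2^d$-fold Gowers-style average. After averaging over $\vec h_k$ using H\"older's inequality and iterating for all $k$, one arrives at a bound of the form
$$|\text{LHS}|^{2^{Kd}} \leq \E_{\text{params}'} \int_X \prod_{\tilde\omega \in \{0,1\}^{Kd}} T^{R_{\tilde\omega}(\text{params}')}(\nu-1),$$
where $\text{params}'$ ranges over a convex body of inradius $\gg H^{1/2}$ and $R_{\tilde\omega}$ is a polynomial obtained as an explicit integer linear combination of the various $\phi_{k,\omega}$'s and dummy cube variables produced by the Gowers-CS expansion.

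Third, I would expand $(\nu - 1) = \nu - 1$ multilinearly at the $2^{Kd}$ corners to rewrite the RHS as
$$\sum_{S \subseteq \{0,1\}^{Kd}} (-1)^{2^{Kd} - |S|} \E_{\text{params}'} \int_X \prod_{\tilde\omega \in S} T^{R_{\tilde\omega}(\text{params}')} \nu.$$
By Proposition \ref{pfc}, each summand equals $1 + o(1)$ provided the differences $R_{\tilde\omega} - R_{\tilde\omega'}$ are non-constant for $\tilde\omega \neq \tilde\omega'$; since $\sum_S (-1)^{2^{Kd} - |S|} = 0$, the alternating sum telescopes to $o(1)$, giving $|\text{LHS}| = o(1)$ as required.

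The main obstacle is verifying that every pairwise difference $R_{\tilde\omega} - R_{\tilde\omega'}$ is non-constant, so that Proposition \ref{pfc} applies. Each $R_{\tilde\omega}$ is built from linear combinations of terms $m_{k,i}^{(\cdot)}\, Q_i(\vec h_k, W)$; for distinct $\tilde\omega, \tilde\omega'$, some cube coordinate differs, isolating a non-zero coefficient in the difference, and non-constancy then reduces to the standing hypothesis that no $Q_i$ is identically zero. This combinatorial bookkeeping mirrors the corresponding verification in \cite[\S 11]{tz}. One also needs $\epsilon_0$ small enough depending on $J := 2^{Kd}$ so that \eqref{joe} and hence Proposition \ref{pfc} are applicable with this many polynomials, which is permitted by the quantifier ordering of Theorem \ref{orthodual}.
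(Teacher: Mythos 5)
Your overall strategy (reduce to the polynomial forms condition via Gowers--Cauchy--Schwarz and multilinear expansion of $\nu-1$) is the right general template, and the iterated Cauchy--Schwarz you describe does formally produce a bound of the shape $\E_{\vec h_1,\dots,\vec h_K}\|\nu-1\|_{U^{Q_1(\vec h_1),\dots,Q_d(\vec h_K)}_{\sqrt M}}^{2^{Kd}}$. The fatal problem is in your last sentence: your final application of Proposition \ref{pfc} involves $J = 2^{Kd}$ simultaneous shifts of $\nu$, hence requires $\epsilon_0$ sufficiently small depending on $K$, and you assert that the quantifier ordering of Theorem \ref{orthodual} permits this. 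It does not: in the statement, $\epsilon_0$ is fixed depending only on $d,t,\vec Q$, and $K$ is introduced \emph{afterwards}; only $L$ is allowed to depend on $K$. This ordering is not negotiable, because in the global argument $K$ comes from the dense model theorem and is of size $(1/\eps)^{O(1)}$, while $\eps$ must in turn be chosen small depending on $\epsilon_0$ and $\delta$ (the normalization \eqref{fdef} gives $\int_X f \asymp \epsilon_0\delta$, which must dominate the error in \eqref{fg}); letting $\epsilon_0$ depend on $K$ would make the whole reduction circular. The paper flags exactly this constraint right after Proposition \ref{pfc}: the number $J$ of polynomials involved cannot be taken arbitrarily large once $\epsilon_0$ is fixed.

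The missing idea is the ``clearing denominators'' device. For fixed $\vec h_1,\dots,\vec h_K$ with all $Q_i(\vec h_k)$ nonzero (the degenerate tuples are first discarded via Schwarz--Zippel), one sets $D_i := \prod_{k=1}^K |Q_i(\vec h_k)|$, so that $D_i = Q_i(\vec h_k) r_{i,k}$ for every $k$; shifting each $m^{(\omega)}_{i,k}$ by $r_{i,k} n^{(\omega)}_i$ with $n^{(\omega)}_i \in [M^{1/4}]$ inserts a \emph{common} shift $(n_i^{(\omega_i)}-n_i^{(0)})D_i$ into all $K$ dual functions simultaneously, at the cost of an error $O\bigl((\log^{O(1)}N)\, M^{1/4}(HW)^{O(K)}/\sqrt M\bigr)$ controlled by \eqref{h1d} --- and this is precisely where $L$, rather than $\epsilon_0$, must be taken large depending on $K$. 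A \emph{single} $d$-dimensional Gowers--Cauchy--Schwarz in the $n$-variables (with the bounded products of shifted $F_{k,\omega}$ occupying the other corners) then bounds everything by $\|\nu-1\|_{U^{D_1,\dots,D_d}_{M^{1/4}}} + o(1)$, whose expansion involves only $2^d$ shifted copies of $\nu-1$, so Proposition \ref{pfc} is invoked with $J=2^d$, independent of $K$. Without some such device your argument proves a genuinely weaker statement (one with $\epsilon_0$ depending on $K$) that cannot be fed into the rest of the paper.
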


\begin{remark}\label{delta} The fact that $L$ depends on $K$ here is the sole reason why $L$ depends on $\delta$ in Theorem \ref{szpp-short} (note that no parameter related to $\delta$ or $K$ appears in Theorem \ref{dens}).
\end{remark}

We now prove Theorem \ref{orthodual}.  Let $d,t,\vec Q, \epsilon_0, K, L, \nu,F_{k,\omega}$ be as in that theorem.  We expand out the left-hand side of \eqref{dual-ortho} as the average of
\begin{equation}\label{num}
\begin{split}
&\int_X (\nu-1) \E_{m_{i,k}^{(\omega)} \in [\sqrt{M}] \forall i=1,\dots,d; k=1,\dots,K; \omega=0,1}  \\
&\quad \prod_{(\omega_1,\dots,\omega_d) \in \{0,1\}^d \backslash \{0\}^d} \prod_{k=1}^K T^{\sum_{i=1}^d (m_{i,k}^{(\omega_i)} - m_i^{(0)}) Q_i(\vec h_k)} F_{k,\omega}
\end{split}
\end{equation}
as $\vec h_1,\dots,\vec h_K$ ranges over $[H]^t$.

We first deal with the degenerate cases in which $Q_i(\vec h_k)=0$ for some $i,k$.  By the Schwarz-Zippel lemma (see e.g. \cite[Lemma D.3]{tz}), the number of tuples $(\vec h_1,\dots,\vec h_K)$ with this degeneracy is $O( H^{Kt-1} )$.  Meanwhile, from \eqref{nu-stomp} and the boundedness of the $F_{k,\omega}$, each expression \eqref{num} is $O(1)$.  Thus the total contribution of this case is $O(H^{-1})$, which is acceptable.

It thus suffices to show that the expression \eqref{num} is $o(1)$ uniformly for all $\vec h_1,\dots, \vec h_K$ with none of the $Q_i(\vec h_k)$ vanishing.

The next step is to ``clear denominators'' (as in \cite{tz}).  Fix $\vec h_1,\dots,\vec h_K$, and write $D_i := \prod_{k=1}^K |Q_i(\vec h_k)|$ for $i=1,\dots,d$.  Then we have $1 \leq D_i \ll O( HW )^{O(K)}$, and we can write
$$ D_i = Q_i(\vec h_k) r_{i,k}$$
for each $i=1,\dots,d$, $k=1,\dots,K$, and some $r_{i,k} = O( HW )^{O(K)}$.  

Let $n_1^{(0)},\dots,n_d^{(0)},n_1^{(1)},\dots,n_d^{(1)}$ be elements of $[M^{1/4}]$.  Then if we shift each variable $m_{i,k}^{(\omega)}$ by $r_{i,k} n_i^{(\omega)}$, we can rewrite \eqref{num} as
\begin{equation}\label{num-2}
\begin{split}
&\int_X (\nu-1) \E_{m_{i,k}^{(\omega)} \in [\sqrt{M}] - r_{i,k} n_i^{(\omega)} \forall i=1,\dots,d; k=1,\dots,K; \omega=0,1}  \\
&\quad \prod_{(\omega_1,\dots,\omega_d) \in \{0,1\}^d \backslash \{0\}^d} \prod_{k=1}^K T^{\sum_{i=1}^d (m_{i,k}^{(\omega_i)} - m_i^{(0)}) Q_i(\vec h_k) + (n_i^{(\omega_i)}-n_i^{(0)}) D_i} F_{k,\omega}.
\end{split}
\end{equation}
The shifted interval $[\sqrt{M}] - r_{i,k} n_i^{(\omega)}$ differs from $[\sqrt{M}]$ by shifts by a set of cardinality $O( M^{1/4} (HW)^{O(K)} )$, and so by \eqref{h1d} one can replace the former by the latter after accepting an additive error of $O( (\log^{O(1)} N) M^{1/4} (HW)^{O(1)} / \sqrt{M} )$, where the implied constants in the exponents depend on $K$.  It is at this point that we crucially use the hypothesis that $L$ be large compared with $K$, to ensure that this error is still $o(1)$.  Thus \eqref{num-2} can be written as
\begin{eqnarray*}
&\int_X (\nu-1) \E_{m_{i,k}^{(\omega)} \in [\sqrt{M}] \forall i=1,\dots,d; k=1,\dots,K; \omega=0,1}   \\
&\quad \prod_{(\omega_1,\dots,\omega_d) \in \{0,1\}^d \backslash \{0\}^d} \prod_{k=1}^K T^{\sum_{i=1}^d (m_{i,k}^{(\omega_i)} - m_i^{(0)}) Q_i(\vec h_k)  + (n_i^{(\omega_i)}-n_i^{(0)}) D_i} F_{k,\omega}\\
&\quad + o(1).
\end{eqnarray*}
Averaging over all such $n_i^{(\omega)}$, we obtain
\begin{eqnarray*}
&\E_{m_{i,k}^{(\omega)} \in [\sqrt{M}] \forall i=1,\dots,d; k=1,\dots,K; \omega=0,1} \int_X (\nu-1) \E_{n_1^{(0)},\dots,n_d^{(0)},n_1^{(1)},\dots,n_d^{(1)} \in [M^{1/4}]} \\
&\quad \prod_{(\omega_1,\dots,\omega_d) \in \{0,1\}^d \backslash \{0\}^d} \prod_{k=1}^K T^{\sum_{i=1}^d (m_{i,k}^{(\omega_i)} - m_i^{(0)}) Q_i(\vec h_k)  + (n_i^{(\omega_i)}-n_i^{(0)}) D_i} F_{k,\omega}\\
&\quad + o(1).
\end{eqnarray*}
Shifting the integral $\int_X$ by $\sum_{i=1}^d (n_i^{(0)}) D_i$ and then using the Gowers-Cauchy-Schwarz inequality \eqref{gcs} (and the boundedness of the functions $\prod_{k=1}^K T^{\sum_{i=1}^d (m_{i,k}^{(\omega_i)} - m_i^{(0)}) Q_i(\vec h_k)} F_{k,\omega}$), we may bound this by
$$ \| \nu-1 \|_{U^{D_1,\dots,D_d}_{M^{1/4}}} + o(1).$$
But from expanding out the Gowers norm \eqref{usdef} and using Proposition \ref{pfc} to estimate the resulting $2^{2^d}$ terms (cf. \cite[Lemma 5.2]{gt-primes}), we see that 
\begin{equation}\label{nud}
 \| \nu-1 \|_{U^{D_1,\dots,D_d}_{M^{1/4}}}^{2^d} = o(1)
\end{equation}
and Theorem \ref{orthodual} follows.

\section{Densification}

Now we prove Theorem \ref{dens}.  It will suffice to establish the following claim.

\begin{proposition}  Let the notation and hypotheses be as in Theorem \ref{dens}.  Then one has
\begin{equation}\label{bound}
\begin{split}
& \left|\E_{\vec h \in [H]^t} \E_{m_1^{(0)},\dots,m_d^{(0)}, m_1^{(1)},\dots,m_d^{(1)} \in [\sqrt{M}]} \int_X 
\prod_{\omega \in \{0,1\}^d} T^{m_1^{(\omega_1)} Q_1(\vec h) + \dots + m_d^{(\omega_d)} Q_d(\vec h)} f_\omega\right| \\
&\quad \ll \eps^c + o(1)
\end{split}
\end{equation}
for some fixed $c>0$ (independent of $\eps$), whenever $(f_\omega)_{\omega \in \{0,1\}^d}$ is a tuple of functions $f_\omega: X \to \R$ (with $\omega := (\omega_1,\dots,\omega_d)$), such that one of the $f_\omega$ is equal to $f$, and each of the remaining functions $f_\omega$ in the tuple either obey the pointwise bound $|f_\omega| \leq 1$ or $|f_\omega| \leq \nu$.
\end{proposition}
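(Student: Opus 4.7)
We plan to establish \eqref{bound} by a densification induction on the number $k \in \{0, 1, \dots, 2^d - 1\}$ of indices $\omega \in \{0,1\}^d \setminus \{\omega_0\}$ for which only $|f_\omega| \leq \nu$ is available (and not $|f_\omega| \leq 1$), where $\omega_0 \in \{0,1\}^d$ denotes the index with $f_{\omega_0} = f$. Since the substitution $m_i^{(0)} \leftrightarrow m_i^{(1)}$ for each $i$ with $(\omega_0)_i = 1$ permutes the cube vertices via $\omega \mapsto \omega \oplus \omega_0$, we may assume without loss of generality that $\omega_0 = (0,\dots,0)$.

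For the base case $k = 0$, every $f_\omega$ with $\omega \neq 0^d$ satisfies $|f_\omega| \leq 1$. Shifting the integration variable by $-\sum_i m_i^{(0)} Q_i(\vec h)$ makes the shift at $\omega = 0^d$ vanish and turns the remaining shifts into $\sum_i (m_i^{(\omega_i)} - m_i^{(0)}) Q_i(\vec h)$, so the left-hand side of \eqref{bound} becomes exactly $\int_X f \cdot \mathcal{D}((f_\omega)_{\omega \neq 0^d})$, which the hypothesis \eqref{dons} bounds by $O(\eps + o(1))$.

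For the inductive step, suppose $k \geq 1$ and fix some $\omega^* \neq 0^d$ with $|f_{\omega^*}| \leq \nu$ but not $|f_{\omega^*}| \leq 1$. The aim is to produce a function $g_{\omega^*}: X \to [-1,1]$ such that swapping $f_{\omega^*}$ for $g_{\omega^*}$ inside the form changes it by at most $O(\eps^{c'} + o(1))$; the resulting form then falls under the inductive hypothesis at level $k - 1$. To construct $g_{\omega^*}$, we apply the dense model theorem (to a non-negative rescaling of $f_{\omega^*}$) with test family $\mathcal{F}$ consisting of suitable bounded normalizations of the \emph{partial contraction} functions
\[
G(x) := \E_{\vec h, m^{(\cdot)}_i} \prod_{\omega \neq \omega^*} T^{L_\omega - L_{\omega^*}} f_\omega(x),
\]
where $L_\omega(\vec h, m) := \sum_i m_i^{(\omega_i)} Q_i(\vec h)$: after translating by $-L_{\omega^*}$, the left-hand side of \eqref{bound} becomes $\int_X f_{\omega^*} \cdot G$, so a dense model approximation of $f_{\omega^*}$ tested against $\mathcal{F}$ transfers to an approximation of the whole multilinear form.

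To invoke the dense model theorem we must verify the pseudorandomness condition $|\int_X (\nu-1) G_1 \cdots G_{K'}| = o(1)$ for every fixed $K'$ and $G_1, \dots, G_{K'} \in \mathcal{F}$. Expanding each $G_j$ as an average and interchanging yields an integral of $\nu - 1$ against a product of shifted copies of $f_\omega$ for $\omega \neq \omega^*$ and $j = 1, \dots, K'$. An auxiliary Cauchy--Schwarz that upper bounds each $\nu$-bounded $f_\omega$ factor pointwise by $\nu$ (at the cost of squaring the remaining inner parameters) reduces matters to a polynomial averaging expression to which Proposition \ref{pfc} applies, yielding $o(1)$ provided $\epsilon_0$ is small and $L$ is large in terms of the fixed data including $K'$. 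The main obstacle is precisely this verification: the Cauchy--Schwarz manipulations must be arranged so that (i) the polynomial shifts in the final average remain pairwise non-constant in the sense required by Proposition \ref{pfc}, and (ii) the total number $J$ of polynomials involved stays fixed as $K'$ varies among the values needed by the dense model theorem. Once this is achieved, iterating the inductive step at most $2^{d} - 1$ times reduces everything to the base case and produces \eqref{bound} with $c > 0$ depending only on $d$.
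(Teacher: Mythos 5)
Your base case is correct and matches the paper: after recentering, the form with all non-$f$ factors $1$-bounded is exactly $\int_X f\,{\mathcal D}(f_\omega)_{\omega\neq 0^d}$, which is controlled by \eqref{dons}. The inductive step, however, departs from the paper's argument in a way that opens a genuine gap. You propose to replace the $\nu$-bounded factor $f_{\omega^*}$ by a $1$-bounded model via the dense model theorem, tested against the partial contractions $G = \E_{\vec h, m}\prod_{\omega\neq\omega^*} T^{L_\omega - L_{\omega^*}} f_\omega$. But these $G$ are dual functions of tuples that still contain $\nu$-bounded entries (including $f$ itself), so they are not $[-1,1]$-valued; the only a priori sup bound available is of size $\log^{O(1)}N$ (from \eqref{h1d}), and normalizing by that destroys the correlation estimate. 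Worse, the pseudorandomness hypothesis you must verify, $\int_X(\nu-1)G_1\cdots G_{K'}=o(1)$ for such unbounded $G_j$, is not a consequence of Proposition \ref{pfc}: once you majorize the $\nu$-bounded factors pointwise by $\nu$ you lose the cancellation in $\nu-1$ and only obtain $O(1)$; and if you instead try to keep the cancellation via Gowers--Cauchy--Schwarz, the $K'$ copies of $\nu$ sitting at a single vertex differ by \emph{fixed} shifts, so the relevant polynomial differences are constant and Proposition \ref{pfc} does not apply. This is precisely the ``dual function condition plus correlation condition'' route of \cite{tz} that the paper's introduction identifies as the obstruction at scale $r\leq\log^L N$, and that the densification method is designed to bypass.

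The paper's inductive step avoids all of this with no appeal to the dense model theorem. Placing the $\nu$-bounded factor at $0^d$ and writing the form as $\int_X f_{0^d}\,{\mathcal D}\vec f$, one applies ordinary Cauchy--Schwarz with weight $\nu$ to reduce to $\int_X \nu({\mathcal D}\vec f)^2$, splits off $\int_X(\nu-1)({\mathcal D}\vec f)^2=o(1)$ (a single Gowers--Cauchy--Schwarz with the doubled tuple $\vec Q\oplus\vec Q$, where Proposition \ref{pfc} does apply because the duplicated polynomials are paired with independent averaging parameters), and then controls $\int_X({\mathcal D}\vec f)^2$ by interpolating between $\int_X({\mathcal D}\vec f)^4\ll 1$ and $\int_X|{\mathcal D}\vec f| = \sup_{|g|\leq 1}\left|\int_X g\,{\mathcal D}\vec f\right|$; the latter is the same multilinear form with the $\nu$-bounded factor replaced by the $1$-bounded $g$, which closes the induction. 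You would need to replace your dense-model step with an argument of this kind (or else actually prove the correlation condition for unbounded dual functions at polylogarithmic scales, which is not done in your sketch and is likely out of reach).
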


Indeed, given the above proposition, then by triangle inequality and decomposition we may replace the bounds $|f_\omega| \leq 1$ or $|f_\omega| \leq \nu$ with $|f_\omega| \leq \nu+1$, and then by setting $f_\omega=f$ for every $\omega$ and using \eqref{udual}, we obtain the claim.

It remains to prove the proposition.  We induct on the number of factors $f_\omega$ for which one has the bound $|f_\omega| \leq \nu$ instead of $|f_\omega| \leq 1$.  First suppose that there are no such factors, thus $|f_\omega| \leq 1$ for all $\omega$ except for one $\omega$, for which $f_\omega=f$.  By permuting the cube $\{0,1\}^d$, we may assume that it is $f_{\{0\}^d}$ that is equal to $f$, with all other $f_\omega$ bounded in magnitude by $1$.  But then the expression in \eqref{bound} may be rewritten as
$$ \int_X f {\mathcal D} (f_\omega)_{\omega \in \{0,1\}^d \backslash \{0\}},$$
and the claim follows from the hypothesis \eqref{dons}.

Now suppose that at least one of the $f_\omega$ (other than the one equal to $f$) is bounded in magnitude by $\nu$ rather than $1$.  By permuting the cube we may assume that $|f_{\{0\}^d}| \leq \nu$.  We then write the left-hand side of \eqref{bound} as
$$ \int_X f_{\{0\}^d} {\mathcal D} \vec f$$
where $\vec f := (f_\omega)_{\omega \in \{0,1\}^d \backslash \{0\}}$.
By Cauchy-Schwarz, it thus suffices to show that
$$ \int_X \nu ({\mathcal D} \vec f)^2 \ll \eps^c + o(1)$$
for some fixed $c>0$.  We will split this into two estimates,
\begin{equation}\label{nuo}
 |\int_X (\nu-1) ({\mathcal D} \vec f)^2| = o(1)
\end{equation}
and
\begin{equation}\label{bland}
 \int_X ({\mathcal D} \vec f)^2 \ll \eps^c + o(1).
\end{equation}
We set aside \eqref{nuo} for now and work on \eqref{bland}.  Bounding all the components of $\vec f$ in magnitude by $\nu+1$ and using Proposition \ref{pfc}, we see that
$$
 \int_X ({\mathcal D} \vec f)^4 \ll 1,
$$
so by H\"older's inequality, it suffices to show that
$$
 \int_X |{\mathcal D} \vec f| \ll \eps^c + o(1)
$$
(for a possibly different fixed $c>0$).  It thus suffices to show that
$$
|\int_X g {\mathcal D} \vec f| \ll \eps^c + o(1)
$$
whenever $g: X \to \R$ is such that $|g| \leq 1$.  But this expression is of the form \eqref{bound} with $f_{\{0\}}$ replaced by $g$, and the claim then follows from the induction hypothesis.

It thus remains to show \eqref{nuo}.  We can rewrite
$$({\mathcal D} \vec f)^2 = {\mathcal D}^{\vec Q \oplus \vec Q([H]^t,W)}_{\sqrt{M}} \vec f_2$$
where $\vec Q \oplus \vec Q$ is the $2d$-tuple
$$\vec Q \oplus \vec Q = (Q_1,\dots,Q_d,Q_1,\dots,Q_d)$$
and $\vec f_2 = (f_{2,\omega})_{\omega \in \{0,1\}^{2d} \backslash \{0\}^d}$ is defined by setting
$$ f_{2,\omega \oplus \{0\}^d} := f_\omega$$
and
$$ f_{2,\{0\}^d \oplus \omega} := f_\omega$$
for $\omega \in \{0,1\}^d \backslash \{0\}^d$, and 
$$ f_{2,\omega \oplus \omega'} := 1$$
for $\omega,\omega' \in \{0,1\}^d \backslash \{0\}^d$.  Applying the Gowers-Cauchy-Schwarz inequality \eqref{gcs}, we may thus bound the left-hand side of \eqref{nuo} by
$$ \| \nu-1\|_{U^{\vec Q \oplus \vec Q([H]^t,W)}_{\sqrt{M}}} \prod_{\omega \in \{0,1\}^d \backslash \{0\}} \|f_\omega\|_{U^{\vec Q \oplus \vec Q([H]^t,W)}_{\sqrt{M}}}^2.$$
Bounding $f_\omega$ by $\nu$ or $1$ and using Proposition \ref{pfc}, we can bound
$$ \|f_\omega\|_{U^{\vec Q \oplus \vec Q([H]^t,W)}_{\sqrt{M}}} \ll 1$$
and further application of Proposition \ref{pfc} (cf. \eqref{nud}) gives
$$ \|\nu-1\|_{U^{\vec Q \oplus \vec Q([H]^t,W)}_{\sqrt{M}}} = o(1)$$
and the claim follows.

\section{The linear case}\label{linear-sec}

We now explain why in the linear case $P_i = (i-1){\mathbf m}$ of Theorem \ref{szpp-short}, one may take $L$ to be independent of $\delta$.  In the linear case, one can replace the averaged local Gowers norm $U^{\vec Q([H]^t,W)}_{\sqrt{M}}$ in Theorem \ref{pgvn} with the simpler norm $U^{1,\dots,1}_{\sqrt{M}}$, where $1$ appears $d=k-1$ times; this follows by repeating the proof of \cite[Proposition 5.3]{gt-primes}, after replacing some global averages with local ones.  (In fact one could replace $\sqrt{M}$ here by $M^{1-\sigma}$ for any fixed $\sigma>0$.) As such, we can ignore the $H$ parameter and the $\vec h$ averaging, and just prove Theorem \ref{orthodual} in the case when $Q_1=\dots=Q_d=1$.  Here, the ``clearing denominators'' step is unnecessary, and so $L$ does not need to be large depending on $K$, which by Remark \ref{delta} ensures that the final $L$ is independent of $\delta$.

\begin{remark} A more careful accounting of exponents (in particular, replacing \eqref{joe} with a more precise asymptotic involving a singular series similar to that in \eqref{grdef}) allows one to take $L$ as small as $C k 2^k$ for some absolute constant $C$; we omit the details.
\end{remark}

\begin{acknowledgement}
The first author is supported by a Simons Investigator grant, the James and Carol Collins Chair, the Mathematical Analysis \& Application Research Fund Endowment, and by NSF grant DMS-1266164. Part of this research was performed while the first author was visiting the Institute for Pure and Applied Mathematics (IPAM), which is supported by the National Science Foundation. The second author is supported by ISF grant 407/12. The second author was on sabbatical at Stanford while part of this work was carried out; she would like to thank the Stanford math department for its hospitality and support.  Finally, the authors thank the anonymous referee for a careful reading of the paper.
\end{acknowledgement}

\end{document}